\newcommand{\conv}{\operatorname{conv}}
\newcommand{\rrvert}{\vert}
\newcommand{\rrVert}{\Vert}
\newcommand{\llvert}{\vert}
\newcommand{\llVert}{\Vert}
\newtheorem{teo}{Theorem}[section]
\newtheorem{theorem}[teo]{Theorem}
\newtheorem{cor}[teo]{Corollary}
\newtheorem{lem}[teo]{Lemma}
\def\P{\mathbb{P}}
\def\E{\mathbb{E}}
\def\cP{\Phi}
\newcommand{\al}{\alpha}
\newcommand{\lam}{\lambda}
\def\mR{\mathbb{R}}
\def\mC{\mathbb{C}}
\def\mZ{\mathbb{Z}}
\def\mN{\mathcal{N}}
\def\tJ{\tilde{J}}
\newcommand{\h}{\tilde{h}}
\newcommand{\tC}{\tilde{C}}
\newcommand{\cL}{{\mathcal L}}
\newcommand{\cX}{{\mathcal X}}
\newcommand{\cY}{{\mathcal Y}}
\newcommand{\cK}{{\mathcal K}}
\def\1{\mathbh{1}}
\def\y{\mathbf{y}}
\def\X{\mathbf{X}}
\begin{document}
\begin{frontmatter}

\title{On the topology of random complexes built over stationary point
processes\thanksref{T1}}
\runtitle{Random complexes}

\begin{aug}
\author[A]{\fnms{D.}~\snm{Yogeshwaran}\corref{}\ead[label=e1]{d.yogesh@isibang.ac.in}}
\and
\author[B]{\fnms{Robert J.}~\snm{Adler}\ead[label=e2]{robert@ee.technion.ac.il}}
\runauthor{D. Yogeshwaran and R.~J. Adler}
\affiliation{Indian Statistical Institute, Bangalore and\\
Technion---Israel Institute of Technology}
\address[A]{Statistics and Mathematics Unit\\
Indian Statistical Institute\\
Bangalore---560059\\
India\\
\printead{e1}}
\address[B]{Faculty of Electrical Engineering\\
Technion---Israel Institute of Technology\\
Haifa 32000\\
Israel\\
\printead{e2}}
\end{aug}
\thankstext{T1}{Supported in part by Israel Science Foundation 853/10,
AFOSR FA8655-11-1-3039 and FP7-ICT-318493-STREP.}

%
\received{\smonth{11} \syear{2012}}
%
\revised{\smonth{9} \syear{2014}}

%
\begin{abstract}
There has been considerable recent interest, primarily motivated by
problems in applied algebraic topology, in the homology of random
simplicial complexes. We consider the scenario in which the vertices of
the simplices are the points of a random point process in $\mathbb
{R}^d$, and the edges and faces are determined according to some
deterministic rule, typically leading to \v{C}ech and Vietoris--Rips
complexes. In particular, we obtain results about homology, as measured
via the growth of Betti numbers, when the vertices are the points of a
general stationary point process. This significantly extends earlier
results in which the points were either i.i.d. observations or the
points of a Poisson process. In dealing with general point processes,
in which the points exhibit dependence such as attraction or repulsion,
we find phenomena quantitatively different from those observed in the
i.i.d. and Poisson cases. From the point of view of topological data
analysis, our results seriously impact considerations of model (non)robustness for statistical inference. Our proofs rely on analysis of
subgraph and component counts of stationary point processes, which are
of independent interest in stochastic geometry.
\end{abstract}

%
\begin{keyword}[class=AMS]
\kwd[Primary ]{60G55}
\kwd{60D05}
\kwd{05E45}
\kwd[; secondary ]{05C10}
\kwd{55U10}
\kwd{58K05}
\end{keyword}
\begin{keyword}
\kwd{Point process}
\kwd{random geometric complex}
\kwd{\u{C}ech}
\kwd{Vietoris--Rips}
\kwd{component counts}
\kwd{Betti numbers}
\kwd{Morse critical points}
\kwd{negative association}
\end{keyword}
\end{frontmatter}

\setcounter{footnote}{1}

\section{Introduction}\label{secintro}

There has been considerable recent interest, primarily motivated by
problems in applied algebraic topology, in the homology of random
simplicial complexes. Two main scenarios have been considered. In the
geometric model, the vertices of the simplices are a random point set,
and the edges and faces are determined according to some deterministic
rule, typically related to the distance between pairs, or general
subsets, of vertices. This has lead, for example, to the \v{C}ech and
Vietoris--Rips complexes on random Euclidean point sets, studied in
papers such as \cite{Kahle11,Kahle13a}, with an extension to the
manifold setting in \cite{Bobrowski13}.

Another approach has been to consider random subgraphs of complete
graphs, leading to a number of papers dealing with the topology of
random complexes generalising Erd\H{o}s--R\'enyi graphs, as in, for
example, \cite
{babson2011fundamental,cohen2011topology,Kahle09,linial2006homological,meshulam2009homological}.
Also, see the recent survey \cite{Kahle13} for progress in this direction.

The current paper is concerned with the first of these approaches,
although with a novel and---from the point of view of both theory and\break 
applications---important change of emphasis. Previous papers on
simplicial complexes built over random point
sets have always assumed that the points were either independent,
identically distributed (i.i.d.) observations from some
underlying distribution on $\mR^d$, or points of a (typically
nonhomogeneous) Poisson point process. Our aim in this paper is to
investigate situations in which the points are chosen from a general
point process, in which the points
exhibit dependence such as attraction or repulsion. From the point of
view of topological data analysis
(TDA) our results, which show that local dependencies can have a major
effect on the growth rates of topological quantifiers such as Betti
numbers, impact on considerations of model (non)robustness for statistical
inference in TDA. We shall not address these
issues here, however, beyond a few comments in Section~\ref{tdasec} below.

To start being a little more specific, given a point process (i.e.,
locally finite random counting measure) $\Phi$ on $\mR^d$,
recall that the random geometric graph $G(\Phi,r)$, for $r > 0$, is
defined as the graph with vertex set $\Phi$ and (undirected) edge set
$\{ (X,Y) \in\Phi^2\dvtx \llVert X-Y\rrVert\leq r\}$. The properties
of random
geometric graphs when $\Phi$ is a Poisson point process or a point
process of i.i.d. points have been analysed in detail (cf. \cite
{Penrose03}), and recently interest has turned to the richer topic of
random simplicial complexes built over these point sets.

A nonempty family $\cK$ of finite subsets of a finite set $V$ (called
\textit{vertices}) is \textit{an abstract simplicial} complex if $\cX\in
\cK
$ and $\cY\subset\cX$ implies that $\cY\in\cK$. Elements of~$\cK
$ are called \textit{faces} or \textit{simplices}, and the \textit
{dimension of
a face} is defined as its cardinality minus 1. We shall be concerned
with two specific complexes (we shall omit the prefix ``abstract
simplicial'' from now on), the \v{C}ech and Vietoris--Rips complexes.
Let $B_x(\varepsilon)$ denote the ball of radius $\varepsilon$
around $x$, and $\cP= \{x_1,x_2,\ldots, x_m\}$ be a finite collection
of points in $\mR^d$.

%
\begin{definition}[(\v{C}ech complexes)]\label{defcechcomplex}
The complex $C(\cP,\varepsilon)$, constructed according to the
following rules, is called the \v{C}ech complex associated to $\cP$
and~$\varepsilon$:
\begin{longlist}[(2)]
\item[(1)] the $0$-simplices of $C(\cP,\varepsilon)$ are the points
in $\cP$;
\item[(2)] an $n$-simplex or $n$-dimensional ``face'' $\sigma
=[x_{i_0},\ldots,x_{i_n}]$ is in $C(\cP,\varepsilon)$ if $\bigcap_{k=0}^{n}
B_{x_{i_k}}(\varepsilon/2) \ne\varnothing$.
\end{longlist}
\end{definition}

%
\begin{definition}[(Vietoris--Rips complexes)]\label{defripscomplex}
The complex $R(\cP,\varepsilon)$, constructed according to the
following rules, is called the Vietoris--Rips complex associated to
$\cP$ and $\varepsilon$:
\begin{longlist}[(2)]
\item[(1)] the $0$-simplices of $R(\cP,\varepsilon)$ are the points
in $\cP$;
\item[(2)] an $n$-simplex or $n$-dimensional ``face'' $\sigma
=[x_{i_0},\ldots,x_{i_n}]$ is in $R(\cP,\varepsilon)$ if
$B_{x_{i_k}}(\varepsilon/2) \cap
B_{x_{i_m}}(\varepsilon/2) \ne\varnothing$ for every $0\le k < m \le n$.
\end{longlist}
\end{definition}

The collection of all faces of dimension at most $k$ is called the
$k$-skeleton of a complex. Observe that the $1$-skeletons of both the
\v{C}ech and Vietoris--Rips complexes are equal and the same as the
geometric graph on $\cP$ with radius $\varepsilon$.
More information on these complexes will be given in Section~\ref
{sectopprelims} when it is needed. Both of these (related) complexes
are important in their own right, with the \v{C}ech complex being of
particular interest since it is known to be homotopy equivalent to the
random Boolean set $\bigcup_{x\in\cP} B_x({\varepsilon})$, which
appears in integral geometry (e.g., \cite{SW}) and continuum
percolation (e.g., \cite{MeesterRoy}). This homotopy equivalence
follows from the nerve theorem \cite{Bjorner95}, Theorem~10.7.
We shall concentrate in this paper on the ranks of the homology
groups---that is, the Betti numbers---of these complexes in the random
scenario. At a heuristic level, the $k$th Betti number $\beta_k$
measures the number of $k$-dimensional cycles or ``holes'' in the
complex. As a consequence of the nerve theorem, $\beta_k = 0$ for $k
\geq d$ for the \v{C}ech complex, and~this is one of the
distinguishing features of the \v{C}ech complex from that of the
Vietoris--Rips complex.

A complementary approach to studying the topological structure of
simplicial complexes is via (nonsmooth) Morse theory, and here results
for Poisson process generated complexes are given in \cite
{Bobrowski11} via results on the Morse theory of the distance function.
Contrasted with this is discrete Morse theory \cite{Forman02}, which
has also been used to study random complexes in \cite
{Kahle09,Kahle11}. In fact, the local structure of Morse critical
points (both nonsmooth and discrete) is often more amenable to
computation than the global structure of the Betti numbers. Thus we
shall also take this route in parts of this paper.

There are some recurring themes and techniques in the analysis of Betti
numbers and Morse critical points, which are intimately related to the
subgraph and component counts of the corresponding random geometric
graph. Thus, from the purely technical side, much of this paper will be
concerned with the intrinsically interesting task of extending the
results of \cite{Penrose03}, Chapter~3, on subgraph and component
counts of Poisson point processes to more general stationary point processes.

Subgraph counts of a random geometric graph are an example of
U-statistics of point processes. Hence, apart from their applications
in this article, our techniques to study subgraph counts of random
geometric graph over general stationary point processes could be useful
to derive asymptotics for many other translation and scale invariant
U-statistics of point processes (e.g., the number of $k$-simplices in a
\v{C}ech or Vietoris--Rips complexes). Also, the results on subgraph
counts are used to derive results about clique numbers, maximum degree
and chromatic number of the random geometric graph on Poisson or
i.i.d. point process (\cite{Penrose03}, Chapter~6) and with a similar
approach, our results can be used to derive asymptotics for clique
numbers, maximum degree and chromatic number of random geometric graphs
over general stationary point processes; see \cite{Sollerhaus12}, Section~4.3.1.

Analysis of subgraph counts will take up all of Section~\ref
{secsubgraphrgg}, the longest section of the paper. From these
results, we shall be able to extract results about Betti numbers (via
combinatorial topology) as well as the numbers of Morse critical
points. In formulating results, we shall relate the topological
features of the random simplicial complexes to known, inherent
properties of the underlying point processes, including joint
densities, void probabilities or Palm void probabilities. The first two
of these properties, along with association properties, are known to be
useful in studying measures of clustering, and their impact on
percolation of random geometric graphs was studied in \cite
{Blaszczy13}. Since our asymptotic results help quantify the impact of
clustering measures such as sub-Poisson and negative association on
topological features of point processes, they provide additional
applications of the tools of B{\l}aszczyszyn and Yogeshwaran \cite
{Blaszczy14} as measures of clustering.

A sampler of some of our main results follows a little necessary notation.

\subsection{Some notation}

We use $\llvert\cdot\rrvert$ to denote Lebesgue measure and
$\llVert\cdot\rrVert$ for the Euclidean
norm on $\mR^d$. Depending on context, $\llvert\cdot\rrvert$ will
also denote the
cardinality of a set. As above, we denote the ball of radius $r$
centred at $x \in\mR^d$ by $B_x(r)$. For $\mathbf{x}= (x_1,\ldots,x_k)
\in
\mR^{dk}$, let $B_{\mathbf{x}}(r) = \bigcup_{i=1}^kB_{x_i}(r)$,
$h(\mathbf{x}) = h(x_1,\ldots,x_k)$ for $h\dvtx \mR
^{dk} \to\mR$ and $ d\mathbf{x}= dx_1 \cdots dx_k$. Let
$\mathbf{1} = (1,\ldots,1)$. We also use the standard Bachman--Landau
notation for asymptotics\footnote{That is, for sequences $a_n$ and
$b_n$ of positive numbers, we write
\begin{eqnarray*}
a_n&=&o(b_n)\quad\iff\quad \mbox{for any $c>0$, there is a
$n_0$ such that $a_n < cb_n$ for all
$n>n_0$};
\\
a_n&=&O(b_n) \quad\iff\quad\mbox{there exists a $c>0$ and a
$n_0$ such that $a_n < cb_n$ for all
$n>n_0$};
\\
a_n&=&\omega(b_n) \quad\iff\quad \mbox{for any $c>0$, there is a
$n_0$ such that $a_n >cb_n$ for all
$n>n_0$};
\\
a_n&=&\Omega(b_n) \quad\iff\quad \mbox{there exists a $c>0$ and a
$n_0$ such that $a_n > cb_n$ for all
$n>n_0$};
\\
a_n&=&\Theta(b_n) \quad\iff\quad a_n=O(b_n)
\mbox{ and } a_n=\Omega(b_n). 
\end{eqnarray*}
}
and say that a sequence of events $A_n, n \geq1$ occurs \emph{with
high probability} (\emph{w.h.p.}) if $\mathbb{P} \{ A_n \} \to
1$ as $n \to\infty$.

\subsection{A result sampler} We shall now describe, without
(sometimes important) precise technical conditions, some of our main
results. Full details are given in the main body of the paper. We start
with $\Phi$, a unit intensity, stationary point process on $\mR^d$,
and set\footnote{Note that our basic setup is a little different
from that of all the earlier papers mentioned above. To compare our
results with existing ones on Poisson or i.i.d. point processes, note
that $r_n^d$ in our results
typically corresponds to $nr_n^d$ elsewhere. For a general
(non-Poisson) point process, (\ref{Phinequn}) provides a more natural
setting.}
%
\begin{equation}
\label{Phinequn} \Phi_n = \Phi\cap\biggl[\frac{-n^{1/d}}{2},
\frac
{n^{1/d}}{2} \biggr]^d.
\end{equation}
Let
\begin{eqnarray*}
&&\beta_k \bigl(C(\Phi_n,r) \bigr),\qquad
\beta_k \bigl(R(\Phi_n,r) \bigr),
\end{eqnarray*}
respectively, denote the $k$th Betti numbers of the \v{C}ech and
Vietoris--Rips complexes based on $\Phi_n$. Note that the $\beta_k$
of a complex depends on the $(k+1)$ skeleton of the complex alone, and
since the $1$-skeletons are the same for both \v{C}ech and
Vietoris--Rips complexes, we have that $\beta_0(C(\Phi_n,r)) = \beta
_0(R(\Phi_n,r))$.

In addition, let $\mathcal{M}_k(\Phi_n)$ denote the set of Morse
critical points (to be defined in Section~\ref{secmorsetheory}) of
index $k \in\{0,\ldots,d\}$ for the distance function
\begin{eqnarray*}
d_n(x) &=& \min_{X \in\Phi_n}\llVert x-X\rrVert,
\end{eqnarray*}
and set
\begin{eqnarray*}
N_k(\Phi_n,r) &=& \bigl\llvert\bigl\{c \in
\mathcal{M}_k(\Phi_n)\dvtx d_n(c) \leq r \bigr\}
\bigr\rrvert.
\end{eqnarray*}

The importance of the critical points stems from the Morse
inequalities, which imply, in particular, that every index $k$ critical
point contributing to $N_k(\Phi_n,r)$ either increases $\beta
_k(C(\Phi_n,r))$ by $1$ or decreases $\beta_{k-1}(C(\Phi_n,r))$ by
$1$. In particular, this implies that $\beta_k(C(\Phi_n,r)) \leq
N_k(\Phi_n,r)$.

This paper is concerned with the behavior, as $n\to\infty$, of $\beta
_k (C(\Phi_n,r_n) )$,
$\beta_k (R(\Phi_n,r_n) )$, $N_k(\Phi_n,r_n) $ and
$\chi(C(\Phi_n,r_n))$, where $\chi$ denotes the Euler
characteristic. In particular, we shall provide closed form expressions
for the asymptotic, normalized first moments of these variables, along
with bounds for second moments for most of them.

Throughout the remainder of this subsection we shall assume that $\Phi
$ is stationary, unit mean and negatively associated (defined
rigorously in Section~\ref{paraAssociatedpp}). Additional side
conditions may also need to hold, but we shall not state them here. Two
simple examples for which everything works are provided by the Ginibre
point process and the simple perturbed lattice. Many of the results
hold for various other sub-classes of point processes as well, but
our nonspecific blanket assumptions allow for ease of exposition. We
divide the results into three classes, depending on the behavior
of $r_n$.
\begin{longlist}
\item[I. \textsc{Sparse regime}: $r_n \to0$.]
Note that since the points of
$\Phi$ only generate edges and faces of the \v{C}ech and
Vietoris--Rips complexes $C(\Phi_n,r)$ and $R(\Phi_n,r)$ when they
are distance less than $r$ apart, and since $\Phi$ has, on, average,
only one point per unit cube, if $r$ is small we expect that both of
these complexes will be made up primarily of the isolated points of
$\Phi$. We describe this fact by calling this the ``sparse''
regime.

Since the $\beta_0$'s are equal for the two complexes, in this setting,
\begin{eqnarray*}
\mathbb{E} \bigl\{\beta_0\bigl(C(\Phi_n,r_n)
\bigr) \bigr\} &=& \mathbb{E} \bigl\{\beta_0\bigl(R(
\Phi_n,r_n)\bigr) \bigr\} = \Theta(n),
\end{eqnarray*}
and for $k \geq1$, there exist functions $f^k \equiv1$ [i.e., $f^k(r)
= 1$, $\forall r$] or $f^k(r) \to 0$, as $r\to0$, depending on the
precise distribution of $\Phi$ and
on the index $k$, such that
%
\begin{eqnarray}
\label{earlyequn} \qquad\mathbb{E} \bigl\{\beta_k\bigl(C(\Phi_n,r_n)
\bigr) \bigr\} &=& \Theta\bigl(n r_n^{d(k+1)}
f^{k+2}(r_n)\bigr),\qquad k \in\{0,\ldots,d-1\},
\nonumber
\\
\mathbb{E} \bigl\{\beta_k\bigl(R(\Phi_n,r_n)
\bigr) \bigr\} &=& \Theta\bigl(n r_n^{d(2k+1)}
f^{2k+2}(r_n)\bigr),\qquad k \geq1,
\\
\nonumber
\mathbb{E} \bigl\{N_k(\Phi_n,r_n)
\bigr\} &=& \Theta\bigl(n r_n^{dk} f^{k+1}(r_n)
\bigr),\qquad k \in\{0,\ldots,d-1\},
\end{eqnarray}
and $\mathsf{Var} (N_k(\Phi_n,r_n) ) =O(\mathbb{E}
\{N_k(\Phi_n,r_n) \} )$, where $\mathsf{Var}
(X ) $ is the variance of $X$.
In addition, $\mathbb{E} \{n^{-1}\chi(C(\Phi_n,r_n)) \}
\to1$.

In the classical Poisson case, studied in the references given above,
it is known that the same results hold with $f^k \equiv1$.

Using stochastic ordering techniques, we shall also show that
clustering of point processes increases the functions $f^k(r)$ and
consequently the mean of the $\beta_k$ and $N_k$ as well. Also, we
know that for the Ginibre point process and for the zeroes of Gaussian
entire functions, $f^k(r) = r^{k(k-1)}$. Thus there is a systematic
difference between the scaling limits for Poisson and at least some
negatively associated point processes.
\end{longlist}

\begin{longlist}
\item[II. \textsc{Thermodynamic regime}: $r_n^d \to\beta\in(0,\infty)$.] In
this regime an edge between two points in $\Phi$, which are,
in a rough sense, an average distance of one unit apart, will be formed
if they manage to get within a distance $\beta^{1/d}$ of one another.
Since, in most scenarios, there should be a reasonable probability of
this happening, we expect to see quite a few edges and, in fact,
simplices and homologies up to dimension $d-1$. Indeed, this is the
case, and the main result in this regime is that topological complexity
grows at a rate proportional to the number of points, in the sense that
\begin{eqnarray*}
\mathbb{E} \bigl\{\beta_k\bigl(C(\Phi_n,r_n)
\bigr) \bigr\} &=& \Theta(n),\qquad k \in\{0,\ldots,d-1\},
\end{eqnarray*}
with identical results for $\mathbb{E} \{\beta_k(R(\Phi
_n,r_n)) \} $ and $ \mathbb{E} \{N_k(\Phi_n,r_n)
\} $ for the appropriate~$k$.
In addition,
$\mathsf{Var} (N_k(\Phi_n,r_n) ) = O(\mathbb{E}
\{N_k(\Phi_n,r_n) \} )$
and
\begin{eqnarray*}
\mathbb{E} \bigl\{n^{-1}{\chi\bigl(C(\Phi_n,r_n)
\bigr)} \bigr\} &\to& 1 + \sum_{k = 1}^d(-1)^k
\nu_k(\Phi,\beta),
\end{eqnarray*}
where the $\nu_k(\Phi,\beta)$ are defined in Theorem~\ref{propmorseptsth}. Since there is no appearance in these results of
an analogue to the $f$ of (\ref{earlyequn}), the normalizations here
have the same orders as in the Poisson and i.i.d. cases.
\end{longlist}

\begin{longlist}
\item[III. \textsc{Connectivity regime}: $r_n^d = \Theta(\log n)$.] Clearly, if
$r_n$ is large enough, there comes a point (which we call the \emph{contractibility radius}) beyond which each point of $\Phi_n$ will be
connected to the others, and the \v{C}ech complex will become
contractible to a single point, while the Vietoris--Rips complex will
become topologically $k$-connected. (This is certainly the case if
$r_n=\sqrt{d}n^{1/d}$.) The question then is ``how large is large enough?''

It turns out that in the current scenario of negative association there
exist case dependent constants $C$ such that for $r_n \geq C(\log
n)^{{1}/{d}}$, $C(\Phi_n,r_n)$ is contractible w.h.p. as $n \to
\infty$. In the specific cases of the Ginibre process or zeroes of
Gaussian entire functions, this happens earlier, and $r_n = \Theta
((\log n)^{{1}/{4}})$ is the radius for contractibility of the \v
{C}ech complex. As a trivial corollary, it follows that, w.h.p. $\chi
(C(\Phi_n,r_n)) = 1$ when $r_n$ is the radius of contractibility.
Further, for the Ginibre process, $r_n = \Theta((\log n)^{{1}/{4}})$
is also the critical radius for $k$-connectedness of the Vietoris--Rips complex.
\end{longlist}

\subsection{Some implications for topological data analysis}\label{tdasec}

Perhaps the core tool of TDA is persistent homology, as visualized
through barcodes and persistence diagrams; cf. \cite
{Carlsson09,edelsharer,Ghrist08,zomorodianbook}. See also the very
accessible recent survey \cite{Carlsson14}. While here is not the
place to go into the details of persistent homology, it can be
described reasonably simply in the setting of this paper. For a given
$n$, and a collection of points $\Phi_n$, consider the collections of
\v{C}ech (or Vietoris--Rips) complexes $C(\Phi_n,r)$ built over
these points, as $r$ grows. Initially, $C(\Phi_n,0)$ will contain only
the points of $\Phi_n$. However, as $r$ increases, different
homological entities (cycles of differing degree) will appear and,
eventually, disappear. If to each such phenomenon we assign an interval
starting at the birth time and ending at the death time, then the
collection of all of these intervals is a representation of the
persistent homology generated by $\Phi_n$ and is known as its {\it
barcode}. The individual intervals are referred to as {\it bars}. The
Betti numbers $\beta_k(C(\Phi_n,r))$ therefore count the number of
bars related to $k$-cycles active at ``connection distance'' $r$.

%
\begin{figure}

\includegraphics{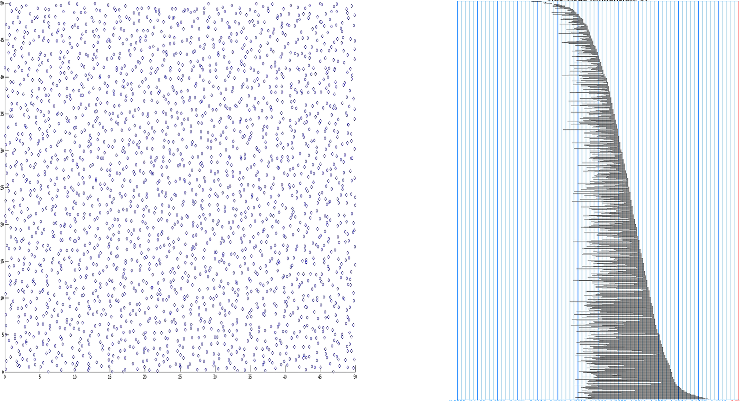}

\caption{Hypergeometric perturbed lattice. $H_1$ barcodes of the Rips complex.}\label{figB1}
\end{figure}

\begin{figure}[b]

\includegraphics{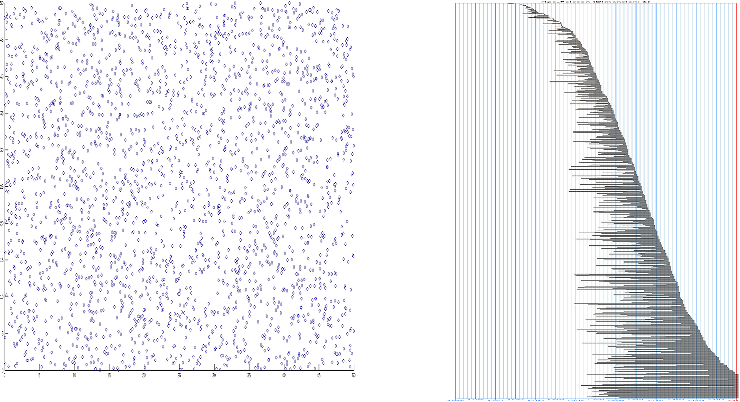}

\caption{Poisson point process. $H_1$ barcodes of the Rips complex.}\label{figB2}
\end{figure}

\begin{figure}

\includegraphics{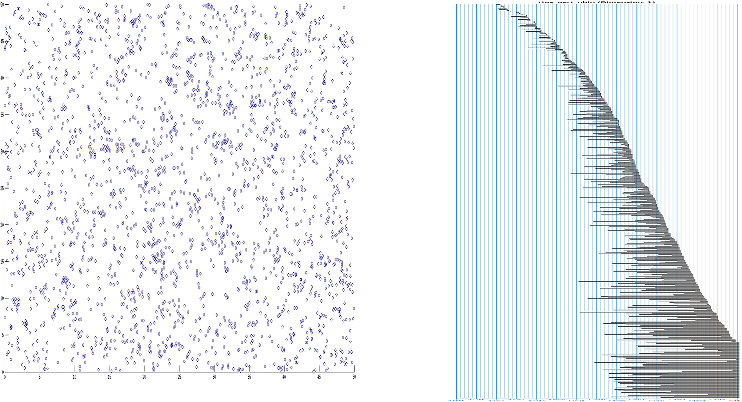}

\caption{Negative Binomial perturbed lattice. $H_1$ barcodes of the Rips complex.}\label{figB3}
\end{figure}

Heuristics and simulations\footnote{These barcodes were simulated
using the easy-to-use and open access package javaPlex \cite
{Javaplex11}.} (see Figures~\ref{figB1},~\ref{figB2},~\ref{figB3})
indicate that as the points are more regularly distributed in a point
process, the bars start later and vanish earlier than those for Poisson
point process. In the three figures, all point processes are of unit
intensity, and we observe that the hypergeometric perturbed lattice has
more regularly distributed points than the Poisson point process, which
in turn has more regularly distributed points than the negative
binomial perturbed lattice. We can see that the corresponding bars
start earlier and end later as we go from hypergeometric perturbed
lattice to the Poisson point process to the negative binomial perturbed
lattice. Some of our results confirm this heuristic. For example, using
the results above it is easy to see that nontrivial homology groups\vspace*{1pt} of
\v{C}ech and Vietoris--Rips complexes start to appear once $r_n$
satisfies $r_n^{d(k+1)}f^{k+2}(r_n) = \omega(n^{-1})$. For the Poisson
case this requires only
$r_n=\omega(n^{-1/d(k+1)})$. Since, typically, $f(r)\to0$ as $r\to
0$, we therefore generally need larger radii for nontrivial homology
(and hence for bars) to appear. The disappearance of homology is
harder, however, and in general, our results on connectivity cannot
confirm the heuristic. However, for the Ginibre point process and
zeroes of GEF in $\mR^2$, they do show that nontrivial topology
vanishes at $r_n = \omega((\log n)^{1/4})$ as opposed to $\omega
((\log n)^{1/2})$ for a two-dimensional Poisson point process of
constant intensity.

As for implications to TDA, applied topologists are beginning to
appreciate the fact that stochasticity underlies their data as a
consequence of sampling, and are beginning to build statistical models
to allow parameter estimation and inference
(e.g., \cite{Bubenik2010,Chazal2011,Turner,Mileyko2011,Bobrowski14}). The results
of this paper show that small changes in model structure (such as the
introduction of attraction and repulsion between points in a data
cloud) can have measurable effects on topological behavior.

The remainder of the paper is organized as follows: in the following
section, we shall summarize some facts needed from the theory of point
processes. Sections~\ref{secsubgraphrgg},~\ref{secbettirgc} and
\ref{secmorsergc} contain the
core technical results on component and subgraph counts, Betti numbers
and Morse critical points, respectively. We shall give careful proofs
for all the results of Section~\ref{secsubgraphrgg} barring the
results on extension to subcomplex counts in Section~\ref
{secsubcomplex} since these mimic earlier proofs. The results of the
Sections~\ref{secbettirgc} and~\ref{secmorsergc} are either easy
corollaries of earlier results or can be proved by using similar
techniques, and so there we shall give less detail. \hyperref[secAppendix]{Appendix} contains a technical
result regarding Palm void probabilities of the Ginibre process which
Manjunath Krishnapur proved for us.

\section{Point processes}\label{secprelims}

Our aim in this section is to set up some general definitions related
to point processes, give some background on those of main interest to
us and to prove two technical results, of some independent interest,
which we shall need later.

\subsection{Point processes and Palm measures}
A point process $\Phi$ in $\mR^d$ is a \mbox{$\mN$-}valued random variable,
where $\mN$ is the space of locally finite (Radon) counting measures
in $\mR^d$ equipped with the canonical $\sigma$-algebra; 
cf. \cite{Kallenberg83,Stoyan95,SW}. We can represent $\Phi$ as
either a random measure, $\Phi(\cdot) = \sum_i \delta_{X_i}(\cdot
)$ or as a random point set $\Phi= \{X_i\}_{i \geq1}$, where, in both
cases, the $X_i$ are the ``points'' of the process.

The factorial moment measure $\alpha^{(k)}$ of a point process $\Phi$
is defined by
\begin{eqnarray*}
\alpha^{(k)} \Biggl(\prod_{i=1}^nB_i
\Biggr) &=& \mathbb{E} \Biggl\{\prod_{i=1}^n
\Phi(B_i) \Biggr\},
\end{eqnarray*}
for disjoint bounded Borel subsets $B_1,\ldots,B_n$. When $k=1$, $\al:=
\alpha^{(1)}$ is called the \emph{intensity} or \emph{mean measure},
and $\alpha^{(k)}$ also serves as the intensity measure of the point process
\begin{eqnarray*}
\Phi^{(k)}:= \bigl\{ (X_1,\ldots,X_k) \in
\Phi^k\dvtx X_i \neq X_j, \forall i \neq j \bigr
\}.
\end{eqnarray*}
The \emph{$k$th joint intensity}, $\rho^{(k)}\dvtx(\mR^{d})^{k} \to
[0,\infty)$ is the density (if it exists) of $\alpha^{k}$ with
respect to
(in this paper) Lebesgue measure. The $\rho^{(k)}$ characterize a
simple point process just as moments characterize a random variable. A
sufficient condition for joint intensities (when they exist) to
characterize a simple point process is $\rho^{(k)}(\cdot) \leq C^k$ for
some constant $C$ and for all $k \geq1$; cf. \cite{Ben09}, Lemma~4.2.6
and Remark 1.2.4. Throughout, we shall restrict ourselves to simple
stationary point processes of unit intensity; namely, $\alpha(B) =
\llvert B\rrvert$ for all bounded, Borel $B$. We also shall assume
that all the
joint intensities $\rho^{(k)}(\cdot)$ exist for the point processes under
consideration in this article.

For a point process $\Phi$ whose probability distribution is $\mathbb
P$, its reduced \emph{Palm probability distribution} $\P^!_x$ at $x \in
\mR^d$ is defined as the probability measure that satisfies the
following disintegration formula for any bounded measurable function
$u\dvtx \mN\times\mR^d \to\mR_+$ with compact support in the second
co-ordinate:
\[
\int_{\mN}\P(d\phi) \int_{\mR^d}\phi(dx)u(\phi,x) = \int_{\mR^d}dx \int_{\mN}
\P^!_x(d\phi)u\bigl(\phi\cup\{x\},x\bigr).
\]

As a consequence of the above definition, for the corresponding Palm
expectation $\E^!_x$ with the function $u$ satisfying assumptions as
above, we get the well-known \emph{refined Campbell theorem} (cf. \cite
{Stoyan95}, page 119, \cite{SW}, Theorem~3.3),
%
%
\begin{equation}
\label{eqnCampbell-Mecke} \mathbb{E} \biggl\{\sum_{X \in\Phi}u(
\Phi,X) \biggr\} = \int_{x \in\mR^d}\E^!_x\bigl\{u\bigl(
\Phi\cup\{x\},x\bigr)\bigr\} \,dx.
\end{equation}

If the point process is not stationary or has unit intensity, one can
still define Palm probability distribution by replacing $dx$ on the
RHS of the above two equations with the intensity measure of the point
process. In particular, the definition of Palm probability gives us
that $\P^!_x\{\Phi(x) = 0\} = 1$. Intuitively, $\P^!_x$ is the
distribution of the remainder of the point process, conditioned on
there having been a point at~$x$. 

\subsection{Some special cases}\label{secclassespp}
We shall assume the reader is familiar with stationary Poisson point
processes, determined, for example, by $\rho^k\equiv1$ for all $k$,
and use this as a basis for comparison in a quick tour through some
non-Poisson cases that will provide examples for the theorems of the
remaining sections.

\subsubsection*{Associated point processes}\label{paraAssociatedpp}

A point process $\Phi$ is called \emph{associated} (or \emph{positively
associated}) if for any finite collection of disjoint bounded Borel
subsets $B_1,\ldots,B_k\subset\mR^d$ and
$f,g$ continuous and increasing functions taking values in~$[0,1]$,
%
\begin{eqnarray}
\label{posassequn} \mathsf{Cov} \bigl( f\bigl(\Phi(B_1),\ldots,
\Phi(B_k)\bigr), g\bigl(\Phi(B_1),\ldots,
\Phi(B_k)\bigr) \bigr) \geq0;
\end{eqnarray}
cf. \cite{BurtonWaymire1985}. The referenced article gives many
examples of associated processes. We call a point process $\Phi$ \emph{negatively associated} if
%
\begin{eqnarray}
\label{negassequn} \mathsf{Cov} \bigl( f\bigl(\Phi(B_1),\ldots,
\Phi(B_k)\bigr), g\bigl(\Phi(B_{k+1}),\ldots,
\Phi(B_l)\bigr) \bigr) \le0,
\end{eqnarray}
for any finite collection of bounded Borel subsets $B_1,\ldots
,B_l\subset\mR^d$ such that $(B_1\cup\cdots\cup B_k)\cap
(B_{k+1}\cup\cdots\cup B_{l})=\varnothing$ and $f,g$ increasing
bounded continuous functions.

In general, the literature contains fewer examples of negatively
associated processes than their positively associated counterparts, a
phenomenon that occurs even in simpler situations; cf. \cite
{Pemantle00}. We shall give two examples of negatively associated point
processes below (determinantal point processes and the simple perturbed
lattice) as these are of more interest to us in this article, but we
refer the reader to \cite{BurtonWaymire1985,Sollerhaus12} for many
examples of positively associated point processes. The stationary
Poisson point process is both negatively and positively associated.
Finite independent unions of negatively associated point processes is
negatively associated as well, and this can be used to construct many
examples of negatively associated point processes from a few simple
examples. Just to reiterate the earlier point about scarcity of
negatively associated point processes, not many ``natural'' examples,
apart from the two presented below and binomial point process, are
known. This is in contrast to the situation for positively associated
point processes. Here are three other point processes of interest to us:

\subsubsection*{Determinantal processes}
A simple point process $\Phi$ on $\mR^d$ is said to be determinantal
with kernel $K\dvtx (\mR^d)^2 \to\mathbb{C}$ if its joint intensities
satisfy the following equality for all $k \geq1$ and for all
$x_1,\ldots,x_k \in\mR^d$:
%
\begin{eqnarray}
\label{equndeterminantal} \rho^{k}(x_1,\ldots,x_k) =
\operatorname{det}\bigl(K(x_i,x_j)_{1 \leq i,j
\leq k}\bigr),
\end{eqnarray}
where $\operatorname{det}$ indicates a determinant of a matrix.

Stationary determinantal point processes with continuous kernels are
negatively associated \cite{Subhro12}, Corollary 6.3. For examples of
stationary determinantal point processes, see
\cite{Lavancier12}, Section~5. A determinantal process of
particular interest is the unit intensity \emph{Ginibre process} (\cite
{Ben09}, Section~4.3.7), which has the continuous kernel
\[
K(z,w) = \exp\bigl(-\tfrac{1}{2}\bigl(\llVert z\rrVert^2 +
\llVert w\rrVert^2\bigr) + z\overline{w}\bigr), \qquad z,w \in\mC.
\]
In \cite{Miyoshi13}, the authors have introduced a family of
determinantal point processes called the $\alpha$-Ginibre point
processes in the context of modeling cellular networks with $\alpha=
1$ corresponding to the Ginibre point process, and as $\alpha\to0$,
the \mbox{$\alpha$-}Ginibre point processes converges to the appropriate
Poisson point process. This class of point processes gives a continuous
family of point processes between the Poisson point process and the
Ginibre point process.

A counterpart to determinantal point processes are permanental point
processes, which can be defined by replacing the determinant in (\ref
{equndeterminantal}) by a matrix permanent.

\subsubsection*{Perturbed lattices}
Let ${N_z\dvtx z \in\mZ^d}$ be i.i.d. integer valued random variables
distributed as $N$, and ${X_{iz}, i \geq1, z \in\mZ^d}$ be
i.i.d. $\mR^d$ valued random variables distributed as $X$. A perturbed
lattice is defined as
\[
\Phi(N,X):= \bigcup_{z \in\mZ^d} \bigcup
_{i=1}^{N_z} \{z + X_{iz}\}, %
\]
provided that $\Phi(N,X)$ is a simple point process. $N$ is called the
{\it replication kernel}, and $X$ is called the {\it perturbation
kernel}. Though the point process is stationary with respect to lattice
translations only, we can make it stationary with respect to $\mR^d$
translations by shifting the origin uniformly within $[0,1]^d$; that
is, $ \bigcup_{z \in\mZ^d} \bigcup_{i=1}^{N_z} \{U + z + X_{iz}\}
$ is stationary if $U$ is uniformly distributed in $[0,1]^d$. The point
process for which $N \equiv1$ and $X$ is uniform on the unit cube is
known as the \emph{simple perturbed lattice} and is negatively
associated. For more details, see below and especially \cite{Blaszczy14}.

\subsubsection*{Zeroes of a Gaussian entire function}
(Normalized) Gaussian entire functions are defined on the complex plane
$\mC$ via the a.s. convergent expansion $f(z) = \sum_{n = 0}^{\infty
}\xi_nz^n/\sqrt{{n!}}$, where the $\xi_n$ are i.i.d. standard
complex Gaussians. The zeros of $f$ (when considered as a point process
in $\mR^2$ and called as zeros of GEF), while neither negatively
associated nor determinantal, share many properties with the Ginibre
process that make them interesting and tractable; cf. \cite{Ben09}
for more background.

\subsubsection*{Sub- and super-Poisson processes}
At times, weaker notions than association, based only on factorial
moment measures, suffice to establish interesting results.

We say that a point process $\Phi_1$ is \emph{$\alpha$-weaker} than
$\Phi_2$ (written $\Phi_1 \leq_{\al-w} \Phi_2$)
if $\al^{(k)}_1(B) \leq\al^{(k)}_2(B)$ for all $k \geq1$ and
bounded Borel $B\subset(\mR^d)^k$.
We call a point\vspace*{1pt} process \emph{$\alpha$-negatively associated}
(\emph{associated}) if
$\alpha^{(k+l)}(B_1 \times B_2) \leq(\geq) \alpha^{(k)}(B_1)\alpha
^{(l)}(B_2)$ for all $k,l \geq1$ and bounded Borel
$B_1 \times B_2 \subset(\mR^d)^k\times(\mR^d)^l$.

Negative association (association) implies $\alpha$-negative
association (association) which in turn implies $\alpha$-weaker
ordering with respect to the Poisson process with intensity measure
$\alpha$.

Even weaker notions of association come from looking at void
probabilities, and
we say that a point process $\Phi_1$ is \emph{$\nu$-weaker} than $\Phi
_2$ (denoted by $\Phi_1 \leq_{\nu-w} \Phi_2$) if
\[
\nu_1(B) = \mathbb{P} \bigl\{ \Phi_1(B) = \varnothing
\bigr\} \leq\mathbb{P} \bigl\{ \Phi_2(B) = \varnothing\bigr\} =
\nu_2(B)
\]
for all $B$ bounded Borel subsets.

Finally, we call a point process \emph{$\alpha$-sub-Poisson}
(\textit{super-Poisson})  if it is $\alpha$-weaker (stronger) than the Poisson
point process and similarly for \emph{$\nu$-sub-Poisson}
(\textit{super-Poisson}). A point process is \emph{weakly sub-Poisson}
(\textit{super-Poisson}) if it is both \mbox{$\alpha$- and} $\nu$-sub-Poisson
(super-Poisson).

It is known that negative association (association) implies the weak
sub-Poisson (super-Poisson) property. Other examples come from
perturbed lattices. For example, if the replication kernel $N$ is
hypergeometric or binomial and $X$ uniform, then the resulting
perturbed lattice $\Phi(N,X)$ is a weakly sub-Poisson point process.
One can also construct a sequence of perturbed lattices $\Phi(N_n,X),
n \geq1$ whose joint intensities and void probabilities monotonically
increase to that of the Poisson point\vspace*{1pt} process by choosing the
replication kernels $N_n$ to be distributed as $\operatorname{Bin}(n,\frac{1}{n})$.
On the other hand, negative binomial and geometric perturbation kernels
lead to weakly super-Poisson processes. Permanental point processes are
also weakly sub-Poisson. See \cite{Blaszczy14} for proofs and more
about stochastic ordering of point processes.

\subsection{Two technical lemmas}

We shall state some general results about Palm measures of these point
processes that we need later. The first lemma shows that negatively
associated point processes are ``stochastically stronger'' than their
Palm versions. This can also be viewed as a justification for the usage
of negative association as the defining property of sparse point
processes. The second shows that Palm versions of negatively associated
point processes also exhibit negative association. We state the results
in more generality than we need, since they seem to be of independent interest.
%

\begin{lem}
\label{lemNApalmvoid}
Let $\Phi$ be a negatively associated stationary point process in $\mR
^d$ of unit intensity and $F\dvtx\mR^{dn} \to\mR_+$ an increasing
bounded continuous function. Then for $B_1,\ldots,B_n$ disjoint
bounded Borel subsets and almost every $\mathbf{x}\in\mR^{dk}$,
%
\begin{eqnarray}
\label{equnlemma21} \qquad\E^!_{x_1,\ldots,x_k}\bigl(F\bigl(\Phi
(B_1),\ldots,\Phi(B_n)\bigr)\bigr) \leq\mathbb{E} \bigl\{F\bigl(
\Phi(B_1),\ldots,\Phi(B_n)\bigr) \bigr\}.
\end{eqnarray}
The above inequality will be reversed for an associated point process.
\end{lem}

\begin{pf} For $0<\epsilon< r$ we have
\begin{eqnarray*}
& & \mathbb{E} \bigl\{F\bigl(\Phi\bigl(B_1\setminus
B_{\mathbf{x}}(r)\bigr),\ldots,\Phi\bigl(B_n\setminus
B_{\mathbf{x}}(r)\bigr)\bigr) | \Phi\bigl(B_{x_i}(\epsilon)\bigr)
\geq1, 1 \leq i \leq k \bigr\}
\nonumber
\\
&&\qquad =  \frac{\mathbb{E} \{F(\Phi(B_1\setminus B_{\mathbf{x}}(r)),\ldots
,\Phi(B_n\setminus B_{\mathbf{x}}(r)))\prod_{i=1}^k\1
[\Phi(B_{x_i}(\epsilon)) \geq1] \} }{\mathbb{P} \{ \Phi
(B_{x_i}(\epsilon)) \geq1, 1 \leq i \leq k \}}
\\
&&\qquad \leq \frac{\mathbb{E} \{F(\Phi(B_1\setminus B_{\mathbf{x}}(r)),\ldots
,\Phi(B_n\setminus B_{\mathbf{x}}(r))) \} \mathbb
{E} \{\prod_{i=1}^k\1[\Phi(B_{x_i}(\epsilon)) \geq1] \} }{\mathbb{P} \{
\Phi(B_{x_i}(\epsilon)) \geq1, 1 \leq i
\leq k \}}
\\
&&\qquad =  \mathbb{E} \bigl\{F\bigl(\Phi\bigl(B_1\setminus
B_{\mathbf{x}}(r)\bigr),\ldots,\Phi\bigl(B_n\setminus
B_{\mathbf{x}}(r)\bigr)\bigr) \bigr\},
\end{eqnarray*}
where the inequality is due to the negative association property of
$\Phi$.

Sending first $\epsilon\to0$ and then $r\to0$, (\ref{equnlemma21})
follows immediately from \cite{Shirai03}, Lemma 6.3, and monotone convergence.
\end{pf}
%

\begin{lem}
\label{lemNAPalmNApp}
Let $\Phi$ be a negatively associated stationary point process in $\mR
^d$ of unit intensity. We shall also assume the existence of all the
joint intensities of the point process. Let $F\dvtx \mR^{dn} \to\mR_+$
and $G\dvtx \mR^{dm} \to\mR_+$ be increasing bounded continuous
functions. Then for $B_1,\ldots,B_{m+n}$ disjoint bounded Borel
subsets\vadjust{\goodbreak} and almost every $\mathbf{x}\in\mR^{d(k+l)}$,
\begin{eqnarray*}
& & \E^!_{\mathbf{x}}\bigl\{F\bigl(\Phi(B_1),\ldots,
\Phi(B_n)\bigr) G\bigl(\Phi(B_{n+1}),\ldots,
\Phi(B_{m+n})\bigr)\bigr\} \rho^{(k+l)}(\mathbf{x})
\\
&&\qquad\leq\E^!_{x_1,\ldots,x_k}\bigl\{F\bigl(\Phi(B_1),\ldots,\Phi
(B_n)\bigr)\bigr\}
\\
&&\qquad\quad{} \times\E^!_{x_{k+1},\ldots,x_{k+l}}\bigl\{G\bigl(\Phi(B_{n+1}),
\ldots,\Phi(B_{m+n})\bigr)\bigr\}
\\
&&\quad\qquad{}\times\rho^{(k)}(x_1,
\ldots,x_k)\rho^{(l)}(x_{k+1},\ldots,x_{k+l}).
\end{eqnarray*}
The above inequality will be reversed for an associated point process
$\Phi$.
\end{lem}

\begin{pf}
As in the proof of Lemma~\ref{lemNApalmvoid}, take $0<\epsilon<r$.
For notational simplicity, set $B^*=B\setminus B_{\mathbf{x}}(r)$ for
bounded Borel set $B$:
\begin{eqnarray*}
&& \E\bigl\{ F\bigl(\Phi\bigl(B^*_1\bigr),\ldots,\Phi
\bigl(B^*_n\bigr)\bigr) G\bigl(\Phi\bigl(B^*_{n+1}\bigr),
\ldots,\Phi\bigl(B^*_{m+n}\bigr)\bigr)
\\
&&\hspace*{98pt}| \Phi\bigl(B_{x_i}(\epsilon)\bigr) \geq1, 1 \leq i \leq(k+l) \bigr
\}
\\
&&\quad{}\times\P\bigl\{\Phi\bigl(B_{x_i}(\epsilon)\bigr) \geq1, 1
\leq i \leq(k+l) \bigr\}
\\
&&\qquad =\E\Biggl\{F\bigl(\Phi\bigl(B^*_1\bigr),\ldots,\Phi
\bigl(B^*_n\bigr)\bigr)\prod_{i=1}^k
\1\bigl[\Phi\bigl(B_{x_i}(\epsilon)\bigr) \geq1\bigr]
\\
&&\hspace*{44pt}{} \times G\bigl(\Phi\bigl(B^*_{n+1}\bigr),\ldots,\Phi
\bigl(B^*_{m+n}\bigr)\bigr)\prod_{i=1}^{l}
\1\bigl[\Phi\bigl(B_{x_{k+i}}(\epsilon)\bigr) \geq1\bigr] \Biggr\}
\\
&&\qquad \leq\frac{\mathbb{E} \{F(\Phi(B^*_1),\ldots,\Phi
(B_n^*))\prod_{i=1}^k\1[\Phi(B_{x_i}(\epsilon)) \geq1] \}
}{\mathbb{P} \{ \Phi(B_{x_i}(\epsilon)) \geq1, 1 \leq i \leq k
\}}
\\
&&\quad\qquad{} \times\frac{\mathbb{E} \{G(\Phi
(B^*_{n+1}),\ldots,\Phi(B^*_{m+n}))\prod_{i=1}^l\1[\Phi
(B_{x_{k+i}}(\epsilon)) \geq1] \} }{\mathbb{P} \{ \Phi
(B_{x_{k+i}}(\epsilon)) \geq1, 1 \leq i \leq l \}}
\\
&&\quad\qquad{} \times\mathbb{P} \bigl\{ \Phi\bigl(B_{x_i}(\epsilon)\bigr) \geq1, 1
\leq i \leq k \bigr\}\mathbb{P} \bigl\{ \Phi\bigl(B_{x_{k+i}}(\epsilon)
\bigr) \geq1, 1 \leq i \leq l \bigr\}
\\
&&\qquad =\mathbb{E} \bigl\{F\bigl(\Phi\bigl(B^*_1\bigr),\ldots,\Phi
\bigl(B_n^*\bigr)\bigr) \llvert\Phi\bigl(B_{x_i}(\epsilon)
\bigr) \geq1, 1 \leq i \leq k \bigr\}
\\
&&\quad\qquad{}\times
\mathbb{E} \bigl\{G\bigl(\Phi\bigl(B^*_{n+1}\bigr),
\ldots,\Phi\bigl(B^*_{m+n}\bigr)\bigr) | \Phi\bigl(B_{x_{k+i}}(
\epsilon)\bigr) \geq1, 1 \leq i \leq l \bigr\}
\\
&&\quad\qquad{}\times
\mathbb{P} \bigl\{
\Phi\bigl(B_{x_i}(\epsilon)\bigr) \geq1, 1 \leq i \leq k \bigr\}
\mathbb{P} \bigl\{ \Phi\bigl(B_{x_{k+i}}(\epsilon)\bigr) \geq1, 1 \leq i
\leq l \bigr\},
\end{eqnarray*}
where the inequality is due to the negative association of $\Phi$. As
in the previous proof, the conditional expectations in the first and
last expressions converge to the respective Palm expectations as
$\epsilon\to0$. Since $\Phi$ is a simple point process, after
dividing by $\llvert B_0(\epsilon)\rrvert^{k+l}$ on both sides, the
product of the
probability terms in the last line converges to $\rho^{(k)}(x_1,\ldots
,x_k)\rho^{(l)}(x_{k+1},\ldots,x_{k+l})$ while\vspace*{1pt} the probability term
in the first line converges to $\rho^{(k+l)}(\mathbf{x})$ as $\epsilon
\to0$. Complete the proof by sending $r \to0$.
\end{pf}
%

\section{Subgraph and component counts in random geometric graphs}\label{secsubgraphrgg}

Recall that for a point set $\Phi$ and radius $r > 0$, the geometric
graph $G(\Phi,r)$ is defined as the graph with vertex set $\Phi$ and
edge-set $\{(X,Y)\dvtx \llVert X-Y\rrVert\leq r\}$.
We shall work with restrictions of $\Phi$ to a sequence of increasing
windows $W_n = [\frac{-n^{1/d}}{2},\frac{n^{1/d}}{2}]^d$, along with
a radius regime $\{r_n > 0\}_{n \geq1}$, setting
$\Phi_n:= \Phi\cap W_n$. The choice of the radius regime will impact
on the asymptotic properties of the geometric graph
when the points of $\Phi$ are those of a point process.

Let $\Gamma$ be a connected graph on $k$ vertices. In this section we
shall be interested in how often $\Gamma$
appears (up to graph isomorphisms) in a sequence of geometric graphs
$G_n=G(\Phi_n,r_n)$, and how often among such appearances it is
actually isomorphic to a component of $G_n$; namely, it is a $\Gamma
$-component of $G_n$. For graphs built over Poisson and i.i.d.
processes, we know from \cite{Penrose03}, Chapters~3, 13, that
no $\Gamma$-components exist when $n(r_n^d)^{k-1} \to0$ ($\llvert
\Gamma\rrvert=
k$), but that they do appear when
$n(r_n^d)^{k-1} \to\infty$. The $\Gamma$-components continue to
exist even when $r_n^d = o(\log n)$ and vanish when $r_n^d = \omega
(\log n)$, which is the threshold for connectivity of the graph.

In this section, we shall show, among other things, that the threshold
for formation of $\Gamma$-components for negatively associated
processes with $r_n\to0$ is $n(r_n^d)^{k-1}f^k(r_n) \to\infty$, for
functions $f^k$ which typically satisfy
$f^k(r)\to0$ as $r\to0$, and so is higher than in the Poisson case.
These components continue to exist even when $r_n^d \to\beta> 0$. The
threshold for the vanishing of components will be treated in the next section.

The reader should try to keep this broader picture in mind as she wades
through the various limits of this section.

\subsection{Some notation and a start}
%
As above, let $\Gamma$ be a connected graph on $k$ vertices, $k \geq
1$ and $\{x_1,\dots,x_k\}$ a collection of $k$
points in $\mR^d$. Introduce the
(indicator) function
$h_{\Gamma}\dvtx\mR^{dk}\times\mR_+\to\{0,1\}$ by
%
\begin{eqnarray}
\label{eqndefnha3.1} h_{\Gamma}(\mathbf{x},r) &:=& \1\bigl[G\bigl(
\{x_1,\ldots,x_k\},r\bigr) \simeq\Gamma\bigr],
\end{eqnarray}
where $\simeq$ denotes graph isomorphism and $\1$ is the usual
indicator function. For a fixed sequence $\{r_n\}$ set
%
\begin{eqnarray}
\label{eqndefnh} h_{\Gamma,n}(\mathbf{x}) &:=& h_{\Gamma}(
\mathbf{x},r_n),
\end{eqnarray}
and, for $r=1$, write
%
\begin{eqnarray}
\label{eqndefnhb} h_{\Gamma}(\mathbf{x}) &:= & h_{\Gamma}(\mathbf{x},1).
\end{eqnarray}

Moving now to the random setting, in which $\Phi$ is a simple point
process with $k$th intensities $\rho^{(k)}$,
we say that $\Gamma$ is \emph{a feasible subgraph} of $\Phi$ if
%
\begin{eqnarray*}
\int_{(\mR^d)^k} h_\Gamma(\mathbf{x})\rho^{(k)}(
\mathbf{x}) \,d\mathbf{x} > 0.
\end{eqnarray*}
Thus $\Gamma$ is a feasible subgraph of $\Phi$ if the $\alpha^{(k)}$
measure of finding a copy of it (up to graph isomorphism) in $G(\Phi
,1)$ is positive. For many of our examples of point processes and
graphs $\Gamma$, feasibility will hold because $\rho^{(k)}(\mathbf{x})
> 0$ a.e. or at least on a large enough set.

We shall be interested in the the number of $\Gamma$-subgraphs,
$G_n(\Phi,\Gamma)$, and number of $\Gamma$-components, $J_n(\Phi,\Gamma
)$, of $\Phi_n$, which are defined as follows:
\begin{eqnarray}
\label{eqnGn} G_n(\Phi,\Gamma) &:=& \frac{1}{k!}\sum
_{X \in\Phi
_n^{(k)}}h_{\Gamma,n}(X),
\nonumber\\[-8pt]\\[-8pt]\nonumber
J_n(\Phi,\Gamma) &:=& \frac{1}{k!}\sum
_{X \in\Phi
_n^{(k)}}h_{\Gamma,n}(X)\1\bigl[\Phi_n
\bigl(B_X(r_n)\bigr) = k\bigr].
\nonumber
\end{eqnarray}
We shall now make a small digression to clarify the terminology. In the
terminology of \cite{Penrose03}, $G_n(\Phi,\Gamma)$ is referred to
as the number of induced $\Gamma$-subgraphs of $G(\Phi,r)$ and not
the number of $\Gamma$-isomorphic subgraphs. However, it is easy to
see that the latter is a finite linear combination of the number of
induced subgraphs of the same order. We shall be considering only
induced subgraphs in this article and hence shall chose to omit the
adjective {\it induced}.

Note that $J_n$ considers graphs based on vertices in $\Phi_n$ only,
namely, all vertices that lie in $W_n$. Such a graph, however, may have
vertices in the complement of~$W_n$, provided the points are within
distance a $r_n$ of $W_n$, and so actually be part of something larger.
To account for this boundary effect, we introduce an additional
variable, which does not count such ``boundary crossing'' graphs. This
is given by
%
\begin{equation}
\tJ_n(\Phi,\Gamma):= \frac{1}{k!}\sum
_{X \in\Phi
_n^{(k)}}h_{\Gamma,n}(X)\1\bigl[\Phi\bigl(B_X(r_n)
\bigr) = k\bigr].
\end{equation}
We shall see later that in the sparse and thermodynamic regimes, the
differences between $J_n$ and $\tJ_n$ disappear in asymptotic results.
Nevertheless, both are needed for the proofs.

The key ingredient in obtaining asymptotics for sub-graph counts and
component counts are the following closed-form expressions, which are
immediate consequences of the Campbell--Mecke formula:
%
%
\begin{eqnarray}
\mathbb{E} \bigl\{G_n(\Phi,\Gamma) \bigr\} & = & \frac
{1}{k!}
\int_{W_n^k}h_{\Gamma,n}(\mathbf{x}) \rho^{(k)}(
\mathbf{x}) \,d\mathbf{x}, \label{eqnExpGn}
\\
\qquad\mathbb{E} \bigl\{J_n(\Phi,\Gamma) \bigr\} & = &
\frac
{1}{k!}\int_{W_n^k}h_{\Gamma,n}(\mathbf{x})
\P^!_{\mathbf{x}}\bigl\{\Phi_n\bigl(B_{\mathbf{x}}(r_n)
\bigr) = 0\bigr\} \rho^{(k)}(\mathbf{x}) \,d\mathbf{x}. \label{eqnExpJn}
\end{eqnarray}
Much of the remainder of this section is based on obtaining asymptotic
expressions for these integrals in terms of basic
point process parameters in the sparse and thermodynamic regimes, as
well as looking at bounds on variances.
We shall consider the connectivity regime only in the following section
on Betti numbers.
Our results here extend those of \cite{Penrose03}, Chapter~3, for
Poisson and i.i.d. processes, and
the general approach of the proofs is thus similar.

\subsection{Sparse regime: \texorpdfstring{$r_n\to0$}{rnto0}}

The intuition behind the following theorem is that in the sparse regime
it is difficult to find $\Gamma$-subgraphs in a random geometric
graph, and even more unlikely that any such subgraph will have another
point of the point process near it. This implies that almost all [in
the sense made precise by~(\ref{eqnexpsubgrsp})] such subgraphs
will actually be a component of the full graph, disconnected from other
components.
%

\begin{theorem}
\label{propexpsubgrsp}
Let $\Phi$ be a stationary point process in $\mR^d$ of unit intensity
and $\Gamma$ be a feasible connected graph\vspace*{1pt} of $\Phi$ on $k$ vertices.
Let $\rho^{(k)} $ be almost everywhere continuous. Assume that $\rho
^{(k)}(0,\ldots,0)=0$, and that there exist functions $f^k_{\rho}\dvtx
\mR_+ \to\mR_+$ and $g^k_{\rho}\dvtx (B_0(k))^k \to\mR_+$ such that
\[
\rho^{(k)}(r\y) = \Theta\bigl(f^k_{\rho}(r)\bigr)
\quad\mbox{and}\quad \lim_{r \to0}\frac{\rho^{(k)}(r\y)}{f^k_{\rho}(r)} =
g^k_{\rho
}(
\y), %
\]
for all $\y$ of the form $\y= (0,y_2,\ldots,y_k)$. Further, assume
that $f^{k+1} = O(f^k )$ as $r \to0$ and $g^k_{\rho} $ is almost
everywhere continuous.
Let $r_n \to0$. Then
%
%
\begin{eqnarray}\label{eqnexpsubgrsp}
\lim_{n \to\infty} \frac{\mathbb{E} \{G_n(\Phi,\Gamma
) \} }{n r_n^{d(k-1)} f^k(r_n)} & = & \lim_{n \to\infty}
\frac
{\mathbb{E} \{J_n(\Phi,\Gamma) \} }{n r_n^{d(k-1)}
f^k(r_n)} \nonumber
\\
& = & \mu_0(\Phi,\Gamma)
\\
&:= & \cases{ 1, &\quad$k = 1$,
\vspace*{5pt}\cr
\displaystyle\frac{1}{k!}\int_{\mR^{d(k-1)}}h_{\Gamma}(
\y)g^k_{\rho}(\y) \,d \y, &\quad$k \geq1$.}
\nonumber
\end{eqnarray}
If $\rho^{(k)}(0,\ldots,0) > 0$, then the same result holds with
$f^k_{\rho} \equiv1$ and $g^k_{\rho} \equiv\rho^{(k)}(0,\break \ldots,0)$.
\end{theorem}

Before turning to the proof of the theorem, we shall make a few points
about its conditions, and provide some examples. As before, we are
assuming that all point processes are normalized to have unit intensity.
%

\begin{remark}\label{re3.2}
(1) Note that the theorem does not guarantee the positivity
of $\mu_0(\Phi,\Gamma)$.\vspace*{1pt}

(2) $f^1(r) \equiv1$ for all stationary point processes of
unit intensity since, in this case, $\rho^{(1)} \equiv1$.

(3) It is easy to check that if $\Phi$ is $\alpha
$-negatively associated or $\alpha$-super-Poisson, then the condition
$f^{k+1} = O(f^k )$ as $r \to0$ is satisfied.

(4) In the case $\rho^{k}(0,\ldots,0) = 0$ for $k\geq2$,
even if we cannot find appropriate $f^k $ or $g^k_{\rho}$,
it is still true that $\mathbb{E} \{G_n(\Phi,\Gamma) \}
= o(n r_n^{d(k-1)})$.

(5)
If $\Phi$ is only $\mZ^d$-stationary (as is the case with perturbed
lattices), then it will be clear from the proof that
(\ref{eqnexpsubgrsp}) still holds, but with
\[
\mu_0(\Phi,\Gamma):= \frac{1}{k!}\int_{[0,1]^d}
\int_{\mR^{d(k-1)}}h_{\Gamma}(x,\y) g^k_{\rho}(x,
\y) \,dx \,d\y. %
\]
%

(6)
For a homogeneous Poisson point process, the theorem holds with $f^k
\equiv1$ and $g^k_{\rho} \equiv1$, recovering \cite{Penrose03},
Proposition 3.1.

(7)
If\vspace*{1pt} $\Phi\geq_{\al-w} \Phi_{(1)}$, then for all $k \geq1$, $\rho
^{(k)} \geq\rho_{(1)}^{(k)} \equiv1$ and hence $f^k \equiv1$ and
also $\mu_0(\Phi,\Gamma) > 0$. Examples of point processes in this
class are all super-Poisson perturbed lattices and permanental point processes.

(8)
For\vspace*{1pt} a perturbed lattice $\Phi$ with perturbation kernel $N \in\{
0,\ldots,K\}$ a.s., $\rho^{(k)}(0,\ldots,0) > 0$ if and only if $k
\leq K$. In this case, $\mu_0(\Phi,\Gamma) > 0$ for a connected
graph $\Gamma$ on $k$ vertices. For connected graphs $\Gamma$ on $k$
vertices with $k > K$, $n r_n^{-d(k-1)} \mathbb{E} \{G_n(\Phi,\Gamma)
\} \to0$. For sub-Poisson perturbed lattices, the
existence of $f^k $ depends on the perturbation kernel. However, for
high values of $k$, it is clear that the scaling for sub-Poisson
perturbed lattices will differ significantly from that of the Poisson case.

(9)
From \cite{Nazarov12}, Theorem 1.1, for the zeroes of Gaussian entire
function and calculations similar to \cite{Ben09}, Theorem 4.3.10, for
the Ginibre point process, one can check that in both cases
\[
\rho^k(x_1,\ldots,x_k) = \Theta\biggl(\prod
_{i < j} \llVert x_i-x_j
\rrVert^2 \biggr). %
\]
Hence $f^k(r) = \Theta(r^{k(k-1)})$ for these processes.
\end{remark}

\begin{pf*}{Proof of Theorem 3.1}
We shall prove the theorem for $k \geq2$. The case $k = 1$ follows
easily by making a few notational changes to the general case. We start
with the convergence of $\mathbb{E} \{G_n(\Phi,\Gamma)
\} $.
In the expression for $\mathbb{E} \{G_n(\Phi,\Gamma) \}
$ in~(\ref{eqnExpGn}), make the change of variable $x_i = x_1 +
r_ny_i$ for $i \geq2$ and then use stationarity of the point process
to obtain
\begin{eqnarray*}
&& \mathbb{E} \bigl\{G_n(\Phi,\Gamma) \bigr\}
\\
&&\qquad  =
\frac{r_n^{d(k-1)}}{k!}\int_{W_n}\int_{(r_n^{-1}(W_n-x))^{k-1}}h_{\Gamma,n}(x
\mathbf{1}+r_n\y)
\rho^{(k)}(x\mathbf{1}+r_n\y
) \,dx \cdots dy_k
\\
&&\qquad  =  \frac{r_n^{d(k-1)}}{k!}\int_{W_n}\int
_{(r_n^{-1}(W_n-x))^{k-1}}h_{\Gamma,n}(r_n
\y)\rho^{(k)}(r_n\y) \,d x \cdots dy_k
\\
&&\qquad  \leq \frac{r_n^{d(k-1)}}{k!}\int_{W_n}\int
_{\mR
^{d(k-1)}}h_{\Gamma}(\y)\rho^{(k)}(r_n
\y) \,dx \cdots dy_k
\\
&&\qquad  =  \frac{nr_n^{d(k-1)}}{k!}\int_{\mR^{d(k-1)}}h_{\Gamma}(\y)
\rho^{(k)}(r_n\y) \,d\y.
\end{eqnarray*}
Since $\Gamma$ is a connected graph, $h_{\Gamma} \equiv0$ outside
$(B_0(k))^{k-1}$, and hence the preceding integral is finite. Further
for all $x \in W_{(n^{1/d}-2k)^d}$, it follows that
\[
B_O(k) \subset(W_n-x) \subset r_n^{-1}(W_n-x)
\]
for large $n$. Hence for large enough $n$,
\begin{eqnarray}
\mathbb{E} \bigl\{G_n(\Phi,\Gamma) \bigr\} & \geq&
\frac
{r_n^{d(k-1)}}{k!}\int_{W_{(n^{1/d}-2k)^d}}\int
_{(r_n^{-1}(W_n-x))^{k-1}}h_{\Gamma}(
\y)\rho^{(k)}(r_n\y) \,dx \cdots dy_k
\nonumber
\\
& = & \frac{r_n^{d(k-1)}}{k!}\int_{W_{(n^{1/d}-2k)^d}}\int_{\mR
^{d(k-1)}}h_{\Gamma}(
\y)\rho^{(k)}(r_n\y) \,dx \cdots d y_k
\nonumber
\\
& = & \frac{((n^{1/d}-2k)^d)r_n^{d(k-1)}}{k!}\int_{\mR
^{d(k-1)}}h_{\Gamma}(\y)
\rho^{(k)}(r_n\y) \,d\y.
\nonumber
\end{eqnarray}
Thus, as $n\to\infty$,
\[
\frac{\mathbb{E} \{G_n(\Phi,\Gamma) \}
}{nr_n^{d(k-1)}} \sim\frac{1}{k!}\int_{\mR^{d(k-1)}}h_{\Gamma}(
\y)\rho^{(k)}(r_n\y) \,d\y. %
\]
Note that we can restrict the range of integration in the above
equation to $B_0(k)$.
Since ${\rho^{(k)}(r_n\y)}/{f^k(r_n)} = g^k_{\rho}(\y)$ a.e. in
$B_0(k)$, and $g^k_{\rho} $ is bounded (as it is continuous) in
$B_0(k)$, we can use the Lebesgue dominated convergence theorem to show
that, as $n\to\infty$,
\[
\frac{\mathbb{E} \{G_n(\Phi,\Gamma) \}
}{nr_n^{d(k-1)}f^k(r_n)} \to\mu_0(\Phi,\Gamma).
\]
This proves the convergence of expected number of $\Gamma$-subgraphs.

We shall now show that the normalized expected numbers of components
and subgraphs are asymptotically
equivalent for small enough radii. This will complete the proof of the theorem.

Using the lower bound of $1 - \Phi(B_X(r_n))$ for the void term in
$J_n $ [see (\ref{eqnGn})], we obtain the following lower bound for
$J_n $:
\begin{eqnarray*}
J_n(\Phi,\Gamma) &\geq& G_n(\Phi,\Gamma) -
\frac{1}{k!}\sum_{X \in
\Phi_n^{(k)}}h_{\Gamma,n}(X)\Phi
\bigl(B_X(r_n)\bigr)
\\
&=& G_n(\Phi,\Gamma) -
E_n(\Phi,\Gamma).
\end{eqnarray*}
Since $J_n \leq G_n $, we only need to show that $\frac{\mathbb{E}
\{E_n(\Phi,\Gamma) \} }{nr_n^{d(k-1)}f^k(r_n)} \to0$.
From the Campbell--Mecke formula, we have
\[
\mathbb{E} \bigl\{E_n(\Phi,\Gamma) \bigr\} = \frac{1}{k!}\int
_{W_n^k}h_{\Gamma,n}(\mathbf{x})\E^!_{\mathbf{x}}\bigl\{
\Phi\bigl(B_{\mathbf{x}}(r_n)\bigr)\bigr\}\rho^{(k)}(
\mathbf{x}) \,d\mathbf{x}. %
\]
From \cite{Shirai03}, Lemma 6.4, we know that $\rho^{!(1)}_{\mathbf
{x}}(y) = \frac{\rho^{(k+1)}(\mathbf{x},y)}{\rho^{(k)}(\mathbf{x})}$.
Now applying the Campbell--Mecke formula for $\E^!_{\mathbf{x}} $ in
the above equation, we find that
\[
\mathbb{E} \bigl\{E_n(\Phi,\Gamma) \bigr\} = \frac{1}{k!}\int
_{W_n^k \times B_{\mathbf{x}}(r_n)}h_{\Gamma,n}(\mathbf{x})\rho^{(k+1)}(
\mathbf{x},y) \,d\mathbf{x} \,dy. %
\]
Now apply the change of variables $x_i = x_1 + r_ny_i$ for $i \geq2$,
$y = r_ny$ and proceed as in the case of $\mathbb{E} \{G_n
\} $ to see that, for large enough $n$,
\[
\mathbb{E} \bigl\{E_n(\Phi,\Gamma) \bigr\} \leq\frac{n
r_n^{dk}}{k!}
\int_{\mR^{d(k-1)} \times B_0(k+1)}h_{\Gamma}(\y)\rho^{(k+1)}(r_n
\y,r_ny) \,d\y \,dy,
\]
where the additional factor of $r_n^d$ is due to the $y$ variable.
Dividing by $nr_n^{d(k-1)}f^k(r_n)$ and bounding $h_{\Gamma}$ by 1, we have
\[
\frac{\mathbb{E} \{E_n(\Phi,\Gamma) \} }{n
r_n^{d(k-1)}f^k(r_n)} \leq\frac{r_n^df^{k+1}(r_n)}{k!f^k(r_n)} \int
_{B_0(k)^{k-1} \times B_0(k+1)}
\frac{\rho^{(k+1)}(r_n\y,r_ny)}{f^{k+1}(r_n)} \,dy \,d\y.
\]
Since $f^{k+1}(r) = O(f^k(r))$ by assumption, $\frac{\mathbb{E}
\{E_n(\Phi,\Gamma) \} }{n r_n^{d(k-1)}f^k(r_n)} \to0$
and hence
\[
\frac{\mathbb{E} \{J_n(\Phi,\Gamma) \} }{n
r_n^{d(k-1)}f^k(r_n)} \to\mu_0(\Phi,\Gamma),
\]
as required.
\end{pf*}

The following corollary follows easily from the ordering of the joint
intensities of the point processes.
%

\begin{cor}
\label{corcompsubgrlts}
Let $\Phi_i, i=1,2$, be two stationary point processes and
$f^k_{\rho_i}, g^k_{\rho_i}$ correspond\vspace*{1pt} to the functions of Theorem~\ref{propexpsubgrsp}. If $\Phi_1 \leq_{\al-w} \Phi_2$, then
\mbox{$f^k_{\rho_1} \leq f^k_{\rho_2} $}. If $f^k_{\rho_1} \equiv f^k_{\rho
_2} $, then $g^k_{\rho_1} \leq g^k_{\rho_2} $, and hence\vspace*{1pt}
$\mu_0(\Phi_1,\Gamma) \leq\mu_0(\Phi_2,\Gamma)$ for a connected
graph $\Gamma$ that is feasible for both $\Phi_1$ and $\Phi_2$.
\end{cor}
%

\subsection{Thermodynamic regime: \texorpdfstring{$r_n^d\to\beta$}{rndtobeta}}

%
\begin{theorem}
\label{propexpsubgrth}
Let $\Phi$ be a stationary point process in $\mR^d$ of unit intensity
and $\Gamma$ be a feasible connected graph of $\Phi$ on $k$ vertices.
Assume\vspace*{1pt} that $\rho^{(k)} $ is almost everywhere continuous, and let
$r_n^d \to\beta> 0$ and $\y= (0,y_2,\ldots,y_k)$. Then
%
\begin{eqnarray}
&& \lim_{n \to\infty} \frac{\mathbb{E} \{G_n(\Phi,\Gamma) \} }{n} \nonumber
\\
&&\qquad = \mu_{\beta}(\Phi,
\Gamma)\label{eqnexpsubgrth}
\\
&&\qquad := \cases{1, &\quad$k = 1$,
\vspace*{5pt}\cr
\displaystyle\frac{\beta^{k-1}}{k!} \int
_{\mR^{d(k-1)}}h_{\Gamma}(\y)\rho^{(k)}\bigl(
\beta^{1/d} \y\bigr) \,d\y, &\quad$k \geq2$,}\nonumber
\\
&& \lim_{n \to\infty} \frac{\mathbb{E} \{J_n(\Phi,\Gamma
) \} }{n} \nonumber
\\
&&\qquad = \gamma_{\beta}(\Phi,\Gamma) 
\\
&&\qquad :=
\cases{\P^{!}_{O} \bigl\{\Phi
\bigl(B_{O}\bigl(\beta^{1/d}\bigr)\bigr) = 0 \bigr\}, &
\quad$k = 1$,
\vspace*{5pt}\cr
\displaystyle\frac{\beta^{k-1}}{k!}\int_{\mR^{d(k-1)}}h_{\Gamma}(
\y)\rho^{(k)}\bigl(\beta^{1/d} \y\bigr),
\vspace*{5pt}\cr
\hspace*{58pt}{}\times
\P^{!}_{\beta^{1/d} \y} \bigl\{\Phi\bigl(B_{\beta^{1/d} \y}\bigl(
\beta^{1/d}\bigr)\bigr) = 0 \bigr\} \,d\y, &\quad$k \geq2$.}\hspace*{-30pt}\nonumber
\end{eqnarray}
If $\Phi$ is a negatively associated point process with $\mathbb
{P} \{ \Phi(B_{\mathbf{x}}(\beta^{1/d})) = 0 \} >
0$ for almost every $\mathbf{x}
\in B_0(\beta^{1/d}k)^k$, then $\gamma_{\beta}(\Phi,\Gamma
) > 0$.
\end{theorem}

Again, before turning to the proof, we make some observations about the theorem:
\begin{longlist}[(2)]
\item[(1)]
The positivity of $\gamma_{\beta}(\Phi,\Gamma)$ is not immediate.
For an example in which this does not hold,
let $\Phi_0$ be a Poisson point process of unit intensity in $\mR^d$,
$\Phi_i, i \geq1$ i.i.d.\vspace*{1pt} copies of the point process of $4$
i.i.d. uniformly distributed points in $B_O(\beta^{1/d}/2)$,
and define the Cox point process,
\[
\Phi:= \bigcup_{X_i \in\Phi_0} \{X_i +
\Phi_i\}. %
\]
Clearly, for all $X \in\Phi$, $\mathbb{P} \{ \Phi(B_X(\beta
^{1/d})) \geq4 \} = 1$.

Now take $r_n^d \equiv\beta$ and $\Gamma$ a triangle, and note that
$J_n(\Phi,\Gamma) = 0$ for all $n \geq1$ and
so $\gamma_{\beta}(\Phi,\Gamma) = 0$, even though all the
assumptions of Theorem~\ref{propexpsubgrth} are satisfied.

\item[(2)]
As in Corollary~\ref{corcompsubgrlts}, $\Phi_1 \leq_{\al-w}
\Phi_2$ implies that $\mu_{\beta}(\Phi_1,\Gamma) \leq\mu_{\beta
}(\Phi_2,\Gamma)$. However, as the previous example shows, the
situation for $\gamma_{\beta}(\Phi,\Gamma)$ is somewhat more complicated.

\item[(3)]
If $\llvert\Gamma\rrvert= 1$, then $J_n(\Phi,\Gamma)$ is the number
of isolated
nodes in the Boolean model of
balls of radii $\beta$ centered on the points of $\Phi$. The Palm
measure of a determinantal point process is also determinantal and in
particular, for the Ginibre process, $\rho^{!(1)}(z) = 1 - e^{-\llVert
z\rrVert
^2}$. Using this explicit structure, it can be shown that, for small
enough $\beta$,
\[
\gamma_{\beta}(\Phi_{\mathrm{Gin}},\Gamma) \geq1 - \pi
\beta^2 + \pi\bigl(1 - e^{-\beta^2}\bigr) > 1 - \pi
\beta^2 + O\bigl(\pi^2 \beta^4\bigr) =
\gamma_{\beta}(\Phi_{\mathrm{Poi}},\Gamma),
\]
and hence the inequality for the $\gamma_{\beta}$ could be reversed
in the thermodynamic regime for even negatively associated point
processes as compared to the sparse regime.
\end{longlist}

\begin{pf*}{Proof of Theorem \ref{propexpsubgrth}}
Since the proof here is similar to the preceding one, we shall not give
all the details, and again, we shall only bother with the case $k \geq
2$. Starting with (\ref{eqnExpGn}) and (\ref{eqnExpJn}), the
proof follows similar lines to that of Theorem~\ref
{propexpsubgrsp}. The difference is that $r_n^{d(k-1)} \to\beta
^{k-1}$ and $\rho^{(k)}(r_n \y) \to\rho^{(k)}(\beta^{1/d}
\y)$, and so there is no need for additional scaling. For the
convergence of $J_n$, one first shows the convergence of $\tJ_n$ using
similar techniques to those in the proof of Theorem~\ref
{propexpsubgrsp}. Then note that
\[
\tJ_n(\Phi,\Gamma) \leq J_n(\Phi,\Gamma) \leq\tJ
_n(\Phi,\Gamma) + G_n(\Phi_n/
\Phi_{(n^{1/d}-(k+1)r_n)^d},\Gamma).
\]
The rightmost term in the upper bound accounts for the boundary
effects, and by arguments similar to those in the proof of Theorem~\ref
{propexpsubgrsp}, it is easy to see that
\[
\mathbb{E} \bigl\{G_n(\Phi_n/\Phi_{(n^{1/d}-(k+1)r_n)},
\Gamma) \bigr\} = O\bigl(\llvert W_n/W_{(n^{1/d}-(k+1)r_n)^d}\rrvert\bigr) =
O\bigl(n^{(d-1)/d}\bigr).
\]
More importantly for us, this expectation is $o(n)$, and so of lower
order than $\tJ_n(\Phi,\Gamma)$. Thus $\mathbb{E} \{J_n(\Phi,\Gamma) \}
/n$ also converges to $\gamma_{\beta}(\Phi,\Gamma
)$. Since $r_n^d \to\beta> 0$, the void probability term in $J_n$ is
not necessarily degenerate.

The positivity of $\gamma_{\beta}(\Phi,\Gamma)$ for negatively
associated point processes is an easy corollary of Lemma~\ref
{lemNApalmvoid}. We need only note that
\[
F\bigl(\Phi(B)\bigr) = \1\bigl[\Phi(B) = 0\bigr] = \bigl(1-\Phi(B)\bigr
) \vee0
\]
is a decreasing bounded continuous function and hence
\[
\P^{!}_{\mathbf{x}}\bigl(\Phi\bigl(B_{\mathbf{x}}\bigl(
\beta^{1/d}\bigr)\bigr) = 0\bigr) \geq\mathbb{P} \bigl\{ \Phi
\bigl(B_{\mathbf{x}}\bigl(\beta^{1/d}\bigr)\bigr) = 0 \bigr\} > 0
\]
for a.e. $\mathbf{x}\in B_0(k)^k$. This completes the proof.
\end{pf*}
%

%

\subsection{Variance bounds for the sparse and thermodynamic regimes}\label{secphasetransition}
The crux of the second moment bounds lies in the fact that, up to
constants, variances are essentially
bounded above (below) by expectations for negatively associated
(associated) point processes. [It is simple to check that $\mathsf
{Var} (\cdot ) = \Theta(\mathbb{E} \{\cdot \} )$
for the Poisson process, which is both negatively associated and
associated; cf. \cite{Penrose03}, Chapter~3.]
We, however, shall need to extend these inequalities to graph
variables, and this is the content of this section.
%

\begin{theorem}[(Covariance bounds in sparse regime)]\label{propcovbdssp}
Let $\Gamma$ and $\Gamma_0$ be two feasible connected graphs on $k$
and $l$ ($k \geq l\geq2$) vertices, respectively, for a stationary
point process $\Phi$ with almost everywhere continuous joint
densities. Let
$\Phi$ satisfy the assumptions of Theorem~\ref{propexpsubgrsp}
and assume that the $f^j$ and $g^j_{\rho} $ exist for all $j \leq
k+l$. Further, let $r_n \to0$ and $\mu_0(\Phi,\Gamma) > 0$.
\begin{longlist}[(2)]
\item[(1)] If $\Phi$ is $\alpha$-negatively associated, then
\[
\mathsf{Cov} \bigl( G_n(\Phi,\Gamma),G_n(\Phi,
\Gamma_0) \bigr) = O\bigl(\mathbb{E} \bigl\{G_n(\Phi,
\Gamma) \bigr\} \bigr).
\]
\item[(2)] If $\Phi$ is $\alpha$-associated, then
\[
\mathsf{Cov} \bigl( G_n(\Phi,\Gamma),G_n(\Phi,
\Gamma_0) \bigr) = \Omega\bigl(\mathbb{E} \bigl\{G_n(
\Phi,\Gamma) \bigr\} \bigr).
\]
\end{longlist}
\end{theorem}

\begin{pf}
We shall prove the result for $\alpha$-negatively associated processes
and $k \geq2$. The $\alpha$-associated case follows by reversing
the inequality sign in (\ref{equn-crucial}) below, and the case $k =
1$ needs a few simple notational changes. We shall again use the
Campbell--Mecke formula to obtain closed-form expressions for the
second moments and then perform a similar analysis as in the proof of
Theorem~\ref{propexpsubgrsp} to obtain the asymptotics.

For $j \leq l$ and $\mathbf{x}= (x_1,\ldots,x_{k+l-j})$, in analogy to
(\ref{eqndefnha3.1})--(\ref{eqndefnhb}), define
\begin{eqnarray*}
h_{\Gamma,\Gamma_0,j}(\mathbf{x}) &:=& h_{\Gamma}(x_1,\ldots
,x_k)h_{\Gamma
_0}(x_1,\ldots,x_j,x_{k+1},\ldots,x_{k+l-j}),
\\
h_{\Gamma,\Gamma_0,j,n} (\mathbf{x}) &:=& h_{\Gamma,n}(x_1,\ldots
,x_k)h_{\Gamma
_0,n}(x_1,\ldots,x_j,x_{k+1},\ldots,x_{k+l-j}).
\end{eqnarray*}
%
Then
%
\begin{eqnarray}\label{equn-crucial}
&& \mathbb{E} \bigl\{G_n(\Phi,\Gamma)G_n(\Phi,\Gamma_0) \bigr\}\nonumber
\\
&&\qquad = \mathbb{E} \biggl\{\sum_{\mathcal{X},\mathcal{Y} \subset\Phi
,\llvert\mathcal{X}\rrvert= k, \llvert\mathcal{Y}\rrvert= l}
h_{\Gamma,n}(\mathcal{X})h_{\Gamma_0,n}(\mathcal{Y}) \biggr\}\nonumber
\\
&&\qquad  =  \sum_{j = 0}^l\mathbb{E} \biggl\{
\sum_{\mathcal{X},\mathcal{Y} \subset\Phi,\llvert\mathcal{X}\rrvert= k, \llvert\mathcal
{Y}\rrvert= l,\llvert\mathcal{X}
\cap\mathcal{Y}\rrvert= j} h_{\Gamma,n}(\mathcal{X})h_{\Gamma
_0,n}(\mathcal{Y}) \biggr\}\nonumber
\\
&&\qquad  =  \sum_{j=0}^l \frac{1}{j!(k-j)!(l-j)!}
\int_{W_n^{k+l-j}}h_{\Gamma,\Gamma_0,j,n}(\mathbf{x})\rho
^{(k+l-j)}(\mathbf{x}) \,d\mathbf{x}\nonumber
\\
&&\qquad \leq \sum_{j=1}^l \frac{1}{j!(k-j)!(l-j)!}
\int_{W_n^{k+l-j}}h_{\Gamma,\Gamma_0,j,n}(\mathbf{x})\rho
^{(k+l-j)}(\mathbf{x}) \,d\mathbf{x}
\\
&&\quad\qquad{}+ \frac{1}{k!l!}\int_{W_n^k}\int
_{W_n^l}h_{\Gamma,\Gamma_0,0,n}(\mathbf{x})\rho^{(k)}(x_1,\ldots,x_k)
\nonumber
\\
&&\hspace*{100pt}{}\times\rho^{(l)}(x_{k+1},\ldots,x_{k+l})
\,dx_1 \cdots dx_{k+l}
\nonumber
\\
&&\qquad =  \sum_{j=1}^l \frac{1}{j!(k-j)!(l-j)!}
\int_{W_n^{k+l-j}}h_{\Gamma,\Gamma_0,j,n}(\mathbf{x})\rho
^{(k+l-j)}(\mathbf{x}) \,d\mathbf{x}
\nonumber
\\
&&\quad\qquad{} + \mathbb{E} \bigl\{G_n(\Phi,\Gamma) \bigr\}
\mathbb{E} \bigl\{G_n(\Phi,\Gamma_0) \bigr\},
\nonumber
\end{eqnarray}
%
where the inequality is due to the $\alpha$-negative association
property. Thus using similar arguments as in the proof of Theorem~\ref
{propexpsubgrsp} and setting $\y= (0,y_2,\ldots,y_{k+l-j})$, we have
%
\begin{eqnarray}
\qquad&& \mathsf{Cov} \bigl( G_n(\Phi,
\Gamma),G_n(\Phi,\Gamma_0) \bigr)
\nonumber
\\
&&\qquad \leq\sum_{j=1}^l \frac{1}{j!(k-j)!(l-j)!}
\int_{W_n^{k+l-j}}h_{\Gamma,\Gamma_0,j,n}(\mathbf{x})\rho
^{(k+l-j)}(\mathbf{x}) \,d\mathbf{x}
\nonumber
\\
&&\qquad \sim\sum_{j=1}^l \frac
{nr_n^{d(k+l-j-1)}f^{k+l-j}(r_n)}{j!(k-j)!(l-j)!}
\int_{\mR
^{d(k+l-j-1)}}h_{\Gamma,\Gamma_0,j}(\y)g^{(k+l-j)}_{\rho}(
\y) \,d \y
\\
&&\qquad = O\bigl(nr_n^{d(k-1)}f^{k}(r_n)\bigr)\nonumber
\\
&& \qquad= O\bigl(\mathbb{E} \bigl\{G_n(\Phi,\Gamma) \bigr
\} \bigr),
\nonumber
\end{eqnarray}
which is what we needed to show.
\end{pf}

Unlike the sparse regime, subgraph counts and component counts have
different limits in the thermodynamic regime and hence we need variance
bounds on component counts in the thermodynamic regime.

%
\begin{theorem}[(Variance bounds in the thermodynamic regime)]\label
{propvarbdsth}
Let $\Phi$ be a negatively associated stationary point process in $\mR
^d$ of unit intensity and $\Gamma$ be a feasible connected graph of
$\Phi$ on $k$ vertices. Assume that $\rho^{(k)} $ is almost
everywhere continuous.
Let $r_n^d \to\beta> 0$ and $\gamma_{\beta}(\Phi,\Gamma) > 0$.
Then we have that
\[
\mathsf{Var} \bigl(\tJ_n(\Phi,\Gamma) \bigr) = O\bigl(\mathbb{E}
\bigl\{J_n(\Phi,\Gamma) \bigr\} \bigr).
\]
\end{theorem}

\begin{pf}
First, write
\begin{eqnarray*}
\tJ_n(\Phi,\Gamma)^2 &=& \tJ_n(\Phi,\Gamma)
+ \sum_{X,Y \subset
\Phi_n, \llvert X\rrvert= \llvert Y\rrvert= k} h_{\Gamma
,n}(X)h_{\Gamma,n}(Y)
\\
&&\hspace*{126pt}{} \times\1\bigl[\Phi\bigl(B_X(r_n)
\bigr) = \Phi\bigl(B_Y(r_n)\bigr) = 0\bigr].
\end{eqnarray*}
By the Campbell--Mecke formula,
\begin{eqnarray*}
&& \mathbb{E} \bigl\{\tJ_n(\Phi,\Gamma)^2 \bigr\}
\\
&&\qquad = \mathbb{E} \bigl\{\tJ_n(\Phi,\Gamma) \bigr\}
\\
&&\hspace*{30pt}{}+
\frac{1}{(k!)^2}\int_{W_n^k
\times W_n^k} h_{\Gamma,n}(
\mathbf{x})h_{\Gamma,n}(\y)
\1\bigl[G\bigl(\{\mathbf{x},\y\};r_n\bigr)\mbox{ is
disconnected}\bigr]
\\
&&\hspace*{103pt}{} \times\P^!_{\mathbf{x},\y}\bigl\{\Phi\bigl(B_{\mathbf{x}}(r_n)
\bigr) = \Phi\bigl(B_{\y}(r_n)\bigr) = 0\bigr\}
\rho^{(2k)}(\mathbf{x},\y)\,d\mathbf{x} \,d \y.
\end{eqnarray*}
Thus
\begin{eqnarray*}
\mathsf{Var} \bigl(\tJ_n(\Phi,\Gamma) \bigr) & = & \mathbb{E} \bigl
\{\tJ_n(\Phi,\Gamma) \bigr\}
\\
&&{} +\frac{1}{(k!)^2}\int_{W_n^k \times W_n^k} h_{\Gamma,n}(\mathbf
{x})h_{\Gamma,n}(\y) Q_n(\mathbf{x},\y)\,d\mathbf{x} \,d\y,
\end{eqnarray*}
where
\begin{eqnarray*}
Q_n(\mathbf{x},\y) &:=& \1\bigl[G\bigl(\{\mathbf{x},\y\};r_n
\bigr)\mbox{ is disconnected}\bigr]
\\
&&{} \times\P^!_{\mathbf{x},\y}\bigl\{\Phi\bigl(B_{\mathbf{x}}(r_n)
\bigr) = \Phi\bigl(B_{\y}(r_n)\bigr) = 0\bigr\}
\\
&&{} \times\rho^{(2k)}(\mathbf{x},\y) - \P^!_{\mathbf{x}}\bigl\{\Phi
\bigl(B_{\mathbf{x}}(r_n)\bigr) = 0\bigr\}
\\
&&{} \times\P^!_{\y}\bigl\{\Phi\bigl(B_{\y}(r_n)
\bigr) = 0\bigr\} \rho^{(k)}(\mathbf{x}) \rho^{(k)}(\y).
\end{eqnarray*}
Choose $n$ large enough so that $r_n \leq\beta^{1/d} + \frac
{1}{4}$. For such an $n$ and negatively associated $\Phi$, we know
from Lemma~\ref{lemNAPalmNApp} that $Q_n(\mathbf{x},\y) \leq0$
for all $\mathbf{x},\y$ such that the set distance $d_S(\mathbf{x},\y):=
\inf_{i,j}\llVert x_i-y_j\rrVert> 3\beta^{1/d}$. Thus, we
have that
\begin{eqnarray*}
\mathsf{Var} \bigl(\tJ_n(\Phi,\Gamma) \bigr) &\leq& \mathbb{E}
\bigl\{\tJ_n(\Phi,\Gamma) \bigr\}
\\
&&{} + \frac{1}{(k!)^2}\int
_{W_n^k \times W_n^k} h_{\Gamma,n}(\mathbf{x})h_{\Gamma,n}(\y)
\\
&&\hspace*{72pt}{} \times Q_n(\mathbf{x},\y)\1\bigl[d(\mathbf
{x},\y) \leq3\beta^{1/d}\bigr] \,d\mathbf{x} \,d\y.
\end{eqnarray*}
From Theorem~\ref{propexpsubgrth}, we know that $\mathbb{E}
\{\tJ_n(\Phi,\Gamma) \} = \Theta(n)$, and using similar
methods as in that theorem, one can show that the latter term in the
above equation is of $O(n)$. Combining the two, we get that $\mathsf
{Var} (\tJ_n(\Phi,\Gamma) ) = O(n)$.
\end{pf}
%

\subsection{Phase transitions in the sparse and thermodynamic regimes}
\label{sec2phasetransition2}

So far, we have concentrated on the asymptotic behavior of the
expectations of the numbers of different types of subgraphs that appear
in the random graph associated with a point process. In this section we
shall combine expectations on first and second moment to obtain results
about these numbers themselves, looking at probabilities that they are
nonzero, as well as $L_2$ and almost sure results about growth and
decay rates. The main theorem of this section is the following:
%

\begin{teo}
\label{teophtralnegassoc}
Let $\Phi$ be a stationary point process with almost everywhere
continuous joint densities and $\Gamma$ a feasible connected graph for
$\Phi$ on $k$ vertices.
\begin{enumerate}[(1)]
\item[(1)] Let $\Phi$ satisfy the assumptions of Theorem~\ref
{propexpsubgrsp} with $\mu_0(\Phi,\Gamma) > 0$. Let $r_n \to0$.
\begin{longlist}[(a)]
\item[(a)] If\vspace*{1pt} $n r_n^{d(k-1)} f^k(r_n) \to0$,\footnote{Note that
neither this assumption nor the one in (1)(b) can hold for $k = 1$,
as $f^1(r) \equiv1$. Hence the statements do not say anything in these
two cases.} then $\mathbb{P} \{ G_n(\Phi,\Gamma) \geq1
\} \to0$.
\item[(b)] If $\Phi$ is $\alpha$-negatively associated and $n
r_n^{d(k-1)} f^k(r_n) \to\beta$ for some $0 < \beta< \infty$, there
there exists a finite $C$ (dependent
on the process but not on $\Gamma$)
for which
\begin{eqnarray*}
\lim_{n \to\infty} \mathbb{P} \bigl\{ J_n(\Phi,\Gamma)
\geq1 \bigr\} \geq\biggl[1 + \frac{C}{\beta\mu_0(\Phi,\Gamma
)} \biggr]^{-1}.
\end{eqnarray*}

\item[(c)] If $\Phi$ is $\alpha$-negatively associated and $n
r_n^{d(k-1)} f^k(r_n) \to\infty$, then
\begin{eqnarray*}
\frac{J_n(\Phi,\Gamma)}{n r_n^{d(k-1)} f^k(r_n)} \stackrel{L_2} {\to}
\mu_0(\Phi,
\Gamma).
\end{eqnarray*}
\end{longlist}

\item[(2)] Let $\Phi$ be a negatively associated point process
satisfying the assumptions of Theorem~\ref{propexpsubgrth} with
$\gamma_\beta(\Phi,\Gamma) > 0$. Let $r_n^d \to\beta$. Then
%
%
\begin{equation}
\label{eqnL2convergencethermo} \frac{J_n(\Phi,\Gamma)}{n} \stackrel
{L_2} {\to}
\gamma_{\beta
}(\Phi,\Gamma).
\end{equation}
\end{enumerate}
\end{teo}

\begin{pf}
The proof for part~(1)(a) follows from Markov's inequality and Theorem
\ref{propexpsubgrsp}. The proof of (1)(b) is based on the following
second moment bound:
\begin{eqnarray*}
\mathbb{P} \bigl\{ J_n(\Phi,\Gamma) \geq1 \bigr\} &\geq&
\frac
{(\mathbb{E} \{J_n(\Phi,\Gamma) \} )^2}{\mathbb{E}
\{J_n(\Phi,\Gamma)^2 \} }
\\
&\geq& \frac{(\mathbb{E} \{J_n(\Phi,\Gamma) \}
)^2}{\mathbb{E} \{G_n(\Phi,\Gamma)^2 \} }
\\
&\geq& \biggl[\frac{(\mathbb{E} \{G_n(\Phi,\Gamma) \}
)^2}{(\mathbb{E} \{J_n(\Phi,\Gamma) \} )^2} + \frac
{\mathsf{Var} (G_n(\Phi,\Gamma) ) }{(\mathbb{E}
\{J_n(\Phi,\Gamma) \} )^2} \biggr]^{-1}.
\end{eqnarray*}
Now, by applying Theorem~\ref{propcovbdssp}, we obtain that there
exists a $C > 0$ for which
\[
\mathbb{P} \bigl\{ J_n(\Phi,\Gamma) \geq1 \bigr\} \geq\biggl[{
\frac{(\mathbb{E} \{G_n(\Phi,\Gamma) \}
)^2}{(\mathbb{E} \{J_n(\Phi,\Gamma) \} )^2} + \frac
{C}{\mathbb{E} \{G_n(\Phi,\Gamma) \} }} \biggr]^{-1}. %
\]
Under the assumptions of (1)(b), $\mathbb{E} \{G_n(\Phi,\Gamma
) \} $ converges to $\beta\mu_0(\Phi,\Gamma)$, while the
first term in the square brackets converges to 1 by Theorem~\ref
{propexpsubgrsp}.

For (1)(c), observe that
\begin{eqnarray*}
\mathsf{Var} \bigl(J_n(\Phi,\Gamma) \bigr) &\leq&\mathsf{Var}
\bigl(G_n(\Phi,\Gamma) \bigr)
\\
&&{} + 2\mathbb{E} \bigl\{G_n(
\Phi,\Gamma) \bigr\} \mathbb{E} \bigl\{E_n(\Phi,\Gamma) \bigr\} -
\bigl(\mathbb{E} \bigl\{E_n(\Phi,\Gamma) \bigr\} \bigr)^2,
\end{eqnarray*}
where $E_n(\Phi,\Gamma)$ is as defined in the proof of Theorem~\ref
{propexpsubgrsp}. From the proofs of Theorems \ref{propexpsubgrsp}~and~\ref{propcovbdssp}, it follows that
\[
\mathsf{Var} \bigl(J_n(\Phi,\Gamma) \bigr) = O\bigl(\mathsf{Var}
\bigl(G_n(\Phi,\Gamma) \bigr) \bigr) = O\bigl(n r_n^{d(k-1)}
f^k(r_n)\bigr),
\]
which completes the proof for this case.

We now prove part~2 in a similar fashion. In fact, it follows easily
from Theorem~\ref{propvarbdsth} and the relation between $J_n$ and
$\tJ_n$ noted in the proof of Theorem~\ref{propexpsubgrth}. More
specifically, as $n \to\infty$,
\begin{eqnarray*}
\mathsf{Var} \biggl(\frac{\tJ_n(\Phi,\Gamma)}{n} \biggr) & \to& 0
\end{eqnarray*}
and
\begin{eqnarray*}
\mathbb{E} \biggl\{\frac{\llVert J_n(\Phi,\Gamma) - \tJ
_n(\Phi,\Gamma)\rrVert}{n} \biggr\} & = & \frac{\mathbb{E} \{
G_n(\Phi_n/\Phi_{(n^{1/d}-(k+1)r_n)^d},\Gamma) \} }{n} \to0.
\end{eqnarray*}
Thus, we have that $\frac{\tJ_n(\Phi,\Gamma)}{n} \stackrel{P}{\to
} \gamma_{\beta}(\Phi,\Gamma)$ and $\frac{\llVert J_n(\Phi,\Gamma) -
\tJ_n(\Phi,\Gamma)\rrVert}{n} \stackrel{P}{\to} 0$ as $n \to\infty$.
\end{pf}

Since $J_n $ is a $K_d$-Lipschitz functional of counting measures for a
constant $K_d$ depending only on the dimension $d$ (see \cite
{Penrose03}, Proof of Theorem~3.15), the result in (\ref
{eqnL2convergencethermo}) above can be strengthened to a
concentration inequality for stationary determinantal point processes
by using the concentration inequality in \cite{Pemantle14}, Theorem~3.6. Further, (\ref{eqnL2convergencethermo}) can also
be extended to a strong law for ergodic point processes via the methods
used in \cite{Yogesh14}, Lemma 3.2.
%

\subsection{Extension to subcomplex counts}\label{secsubcomplex}

The earlier section was concerned about subgraph and component counts
but, as will be seen later, the techniques can be adapted to the
analysis of wider classes of functionals. One specific class of
functionals for which we shall explicitly state the asymptotics are
subcomplex counts. These will be used in the next section. While
asymptotics for Vietoris--Rips complexes can be derived using those of
subgraph counts, we shall need the results of this section to derive
the corresponding asymptotics for \v{C}ech complexes. We shall need a
few definitions before stating these results.

Let $\cK$ and $\cL$ be two complexes with vertex-sets $V_1$ and $V_2$,
respectively. A~function $f\dvtx V_1 \to V_2$ is called a \textit{simplicial
map} if $[f(v_1),\ldots,f(v_k)]$ is a face of $\cL$ whenever
$[v_1,\ldots,v_k]$ is a $k$-face of $\cK$. If $f$ is a bijection and
$f^{-1}$ is also a simplicial map, $f$ is said to be a \textit{simplicial
isomorphism}. If there exists a simplicial isomorphism between two
complexes $\cK$ and $\cL$, then we write $\cK\simeq\cL$.

Let $\Delta$ be a complex on $k$ vertices ($k \geq1$) such that its
$1$-dimensional skeleton (i.e., the underlying graph) is connected (as
a graph), and let $\{x_1,\dots,x_k\}$ be a collection of $k$ points in
$\mR^d$. As in the graph case, introduce the (indicator) function $\h
_{\Delta}\dvtx\mR^{dk}\times\mR_+\to\{0,1\}$ defined by
%
\begin{eqnarray}
\label{eqndefnha} \h_{\Delta}(\mathbf{x},r) &:=& \1\bigl[C\bigl(
\{x_1,\ldots,x_k\},r\bigr) \simeq\Delta\bigr],
\end{eqnarray}
where $\simeq$ denotes simplicial isomorphism, and $C$ was defined in
Definition~\ref{defcechcomplex}. Let $\Phi$ be a simple stationary
point\vspace*{1pt} process and $r_n, n \geq1$ be a sequence of radii. As before,
setting $\h_{\Delta}(\mathbf{x}):= \h_{\Delta}(\mathbf{x},1)$, we
call $\Delta$ a \textit{feasible subcomplex} of $\Phi$ if
\begin{eqnarray*}
\int_{(\mR^d)^k} \h_{\Delta}(\mathbf{x})\rho^{(k)}(
\mathbf{x}) \,d\mathbf{x} > 0.
\end{eqnarray*}

We can define an (induced) subcomplex count for the \v{C}ech complex
on the point process $\Phi_n$ as follows:
\[
\tC_n(\Phi,\Delta):= \frac{1}{k!}\sum
_{X \in\Phi_n^{(k)}}\h_{\Delta}(X,r_n).
\]
Also of interest is the number of isolated $\Delta$ subcomplexes of
the \v{C}ech complex on the point process $\Phi_n$, defined as follows:
\[
\tC^*_n(\Phi,\Delta):= \frac{1}{k!}\sum
_{X \in\Phi_n^{(k)}}\h_{\Delta}(X,r_n)\1\bigl[
\Phi_n\bigl(B_X(r_n)\bigr) = k\bigr].
\]

For the sake of brevity and to avoid repetition, we shall not provide
the proofs of the following two theorems, as they are a simple
extension of the proofs of Theorems~\ref{propexpsubgrsp},~\ref
{propexpsubgrth} and~\ref{teophtralnegassoc}; see also the
explanation before (\ref{eqncriticalpts}) in Section~\ref{secmorsergc}.

%
%
\begin{theorem}
\label{propexpcompsp}
Let $\Phi$ be a stationary point process satisfying the assumptions of
Theorem~\ref{propexpsubgrsp} and $\Delta_k$ be a feasible
connected complexes of $\Phi$ on $k$ vertices. Let $r_n \to0$. Then
%
%
\begin{eqnarray}\label{eqnexpcompsp}
\lim_{n \to\infty} \frac{\mathbb{E} \{\tC_n(\Phi,\Delta
_k) \} }{n r_n^{d(k-1)} f^k(r_n)} & = & \lim_{n \to\infty}
\frac{\mathbb{E} \{\tC^*_n(\Phi,\Delta_k) \} }{n
r_n^{d(k-1)} f^k(r_n)}\nonumber
\\
& = & \tilde{\mu}_0(\Phi,\Delta_k)
\\
&:= & \cases{1, &\quad$k = 1$,
\vspace*{5pt}\cr
\displaystyle\frac{1}{k!}\int_{\mR^{d(k-1)}}h_{\Delta_k}(
\y)g^k_{\rho}(\y) \,d\y, &\quad$k \geq2$.}
\nonumber
\end{eqnarray}

If $\rho^{(k)}(0,\ldots,0) > 0$, then the same result holds with
$f^k_{\rho} \equiv1$ and $g^k_{\rho} \equiv\rho^{(k)}(0,\ldots,0)$.

If $\Phi$ is $\alpha$-negatively associated, $\tilde{\mu}_0(\Phi,\Delta
_k) > 0$ and $n r_n^{d(k-1)} f^k(r_n) \to\infty$, then
\begin{eqnarray*}
\frac{\tC^*_n(\Phi,\Delta_k)}{n r_n^{d(k-1)} f^k(r_n)} \stackrel{L_2}
{\to} \tilde{\mu}_0(
\Phi,\Delta_k).
\end{eqnarray*}
\end{theorem}

%
\begin{theorem}
\label{propexpcompth}
Let $\Phi$ be a stationary point process in $\mR^d$ of unit intensity
and~$\Delta$ be a feasible connected complex of $\Phi$ on $k$
vertices. Assume that $\rho^{(k)} $ is almost everywhere continuous,
and let $r_n^d \to\beta> 0$ and $\y= (0,y_2,\ldots,y_k)$. Then
%
\begin{eqnarray}
&& \lim_{n \to\infty} \frac{\mathbb{E} \{\tC_n(\Phi,\Delta) \} }{n} \nonumber
\\
&&\qquad =  \tilde{\mu}_{\beta}(\Phi,\Delta) 
\\
&&\qquad := \cases{1, &\quad$k = 1$,
\vspace*{5pt}\cr
\displaystyle\frac{\beta^{k-1}}{k!} \int
_{\mR^{d(k-1)}}\h_{\Delta}(\y)\rho^{(k)}\bigl(
\beta^{1/d} \y\bigr) \,d\y, &\quad$k \geq2$,}\nonumber
\\
&& \lim_{n \to\infty} \frac{\mathbb{E} \{\tC^{*}_n(\Phi,\Delta) \} }{n}\nonumber
\\
&&\qquad = \tilde{\gamma}_{\beta}(\Phi,\Delta) 
\\
&&\qquad := \cases{\P^{!}_{O} \bigl\{\Phi
\bigl(B_{O}\bigl(\beta^{1/d}\bigr)\bigr) = 0 \bigr\}, &
\quad$k = 1$,
\vspace*{5pt}\cr
\displaystyle\frac{\beta^{k-1}}{k!}\int_{\mR^{d(k-1)}}
\h_{\Delta}(\y)\rho^{(k)}\bigl(\beta^{1/d} \y\bigr)
\vspace*{5pt}\cr
\hspace*{57pt}{}\times\P^{!}_{\beta^{1/d} \y} \bigl\{\Phi
\bigl(B_{\beta^{1/d} \y}\bigl(\beta^{1/d}\bigr)\bigr) = 0 \bigr\} \,d\y,
&\quad$k \geq2$.}\hspace*{-30pt}\nonumber
\end{eqnarray}
If $\Phi$ is a negatively associated point process and $\tilde{\gamma
}_{\beta}(\Phi,\Delta) > 0$, then
\begin{eqnarray*}
\frac{\tC^*_n(\Phi,\Delta_k)}{n} \stackrel{L_2} {\to} \tilde{
\gamma}_{\beta}(\Phi,\Delta_k).
\end{eqnarray*}

Further, if $\Phi$ is a negatively associated point process such that
for almost every $\mathbf{x}= (x_1,\ldots,x_k)
\in B_0(\beta^{1/d}k)^k$, $\mathbb{P} \{ \Phi(B_{\mathbf
{x}}(\beta^{1/d})) = 0 \} > 0$, then $\tilde{\gamma
}_{\beta}(\Phi,\Delta) > 0$.
\end{theorem}
%

\section{Betti numbers of random geometric complexes}\label{secbettirgc}
This is really the main section of the paper, giving, as it does,
results about the homology of random geometric complexes through their
Betti numbers. Despite this, it will turn out that, as mentioned
earlier, the hard work for the proofs has already been done in the
previous section.

We shall start with a review of the basic topological notions needed to
formulate our results, along with an explanation of the connections
between Betti numbers of random complexes, component numbers of random
geometric graphs and subcomplex counts. This connection was established
and exploited in \cite{Kahle11,Kahle13} to extract theorems for Betti
numbers from those for the component counts of random geometric graphs
and subcomplex counts.
%

\subsection{Topological preliminaries}\label{sectopprelims}

Recall that \v{C}ech and Vietoris--Rips complexes and their faces
were already defined at
Definitions~\ref{defcechcomplex} and~\ref{defripscomplex} in the
\hyperref[secintro]{Introduction}, and that the dimension of a face $\sigma$ is $\llvert
\sigma\rrvert-1$.
Recall also that the edges of the random geometric graph $G(\Phi,r)$
are the $1$-dimensional faces of $C(\Phi,r)$ or $R(\Phi,r)$.

Now, however, we require some additional terminology. The
Vietoris--Rips complex $R(\Phi_n,r)$ is also called the \textit{clique
complex} (or \textit{flag complex}) of $G(\Phi_n,r)$, as the faces are
cliques (complete subgraphs) of the $1$-dimensional faces. Let
$H_k(C(\Phi_n,r))$ and $H_k(R(\Phi_n,r))$,
respectively, denote the $k$th simplicial homology groups of the random
\v{C}ech and Vietoris--Rips complexes.
(We shall take our homologies over the field $\mZ_2$, but this will
not be important.)
In this section we shall be concerned with asymptotics for the Betti
numbers $\beta_k(C(\Phi_n,r))$ and $\beta_k(R(\Phi_n,r))$, (i.e., the
ranks of the homologies) and through them the appearance and
disappearance of homology groups.

Next, let $P_k$ be the $(k+1)$-dimensional cross-polytope in $\mR
^{k+1}$, containing the origin, and defined
to be the convex hull of the $2k+2$ points $\{\pm e_i\}$, where
$e_1,\ldots,e_{k+1}$ are the standard basis vectors of $\mR^{k+1}$.
The boundary of $P_k$, which we denote by $\tilde{O}_k$, is a
$k$-dimensional simplicial complex, homotopic to a \mbox{$k$-}dimensional
sphere. Let $O_k$ be the $1$-skeleton of $\tilde{O}_k$ that is, the
clique complex of the graph $O_k$ is $\tilde{O}_k$. In terms of
simplicial homology of the random Vietoris--Rips complexes, the
existence of subgraphs isomorphic to $O_k$ is the key to understanding
$k$-cycles, and so the $k$th homology.
In fact, from \cite{Kahle09}, Lemma 5.3, we know that, because the
Vietoris--Rips complex is a clique complex,
any nontrivial element of the $k$-dimensional homology $H_k(R(\Phi
_n,r))$ arises from a subcomplex on at least $2k+2$ vertices. If it has
only $2k+2$ vertices, then it will be isomorphic to $\tilde{O}_k$ and
the corresponding $1$-skeleton will be isomorphic to $O_k$.

Now let $\Gamma^j_k, j = 1,\ldots,n_k$ ($n_k < \infty$) be an
ordering of the different graphs that arise when extending a
($k+1$)-clique (i.e., a $k$-dimensional face) to a minimal (in terms
of the number of edges) connected subgraph on $2k+3$ vertices. Thus the
$\Gamma^j_k$ are all graphs on $2k+3$ vertices, having ${k+1\choose2}
+ k + 2$ edges.

Finally, for a given finite graph $\Gamma$, let $\tilde{G}(\Phi
_n,\Gamma)$ denote the number of subgraphs of $G(\Phi_n,r_n)$ that
are isomorphic to $\Gamma$. However, as explained in the discussion
after (\ref{eqnGn}), $\tilde{G}(\Phi_n,\Gamma)$ is a finite
linear combination of $G_n(\Phi,\Gamma')$'s with $\Gamma'$'s
being of the same order as $\Gamma$.

Then \cite{Kahle09}, Lemma 5.3, and a dimension bound in \cite
{Kahle11}, equation~(3.1), imply the following crucial inequality
linking Betti
numbers to component and subgraph counts in Vietoris--Rips complexes
for $k \geq1$ and for all $n \geq1$:
%
%
\begin{equation}
\label{eqnboundbettiVR} J_n(\Phi_n,O_k) \leq
\beta_k\bigl(R(\Phi_n,r_n)\bigr) \leq
J_n(\Phi,O_k) + \sum_{j=1}^{n_k}
\tilde{G}_n\bigl(\Phi,\Gamma^j_{k}\bigr).
\end{equation}

A related inequality holds for \v{C}ech complexes. Let $\tilde{\Gamma
}_k$ be the complex on $k$ vertices such that any $k-1$ vertices form a
$(k-1)$-face, but $\tilde{\Gamma}_k$ is not a $k$-face. Any
collection of vertices $X$ for which $G(X,r) \simeq\tilde{\Gamma}_k$
is said to form an \textit{empty} $(k-1)$-\textit{simplex}. Let $\tilde
{\Gamma
}'_k$ be the complex of a $(k-1)$-face with an extra edge attached
to two vertices and $\tilde{\Gamma}''_k$ be the graph of a
$(k-1)$-face with a path of length $2$ attached to one of the vertices.
Both $\tilde{\Gamma}'_k$ and $\tilde{\Gamma}''_k$ are
complexes of order $k+1$. The we have the following combinatorial
inequality from \cite{Kahle13}, equation~(5), for $k \in\{0,\ldots,d-1\}$ and
for all $n \geq1$:
%
%
\begin{eqnarray}\label{eqnboundbettiC}
\tC^*_n(\Phi,\tilde{\Gamma}_{k+2}) &\leq&
\beta_k\bigl(C(\Phi_n,r_n)\bigr)
\nonumber\\[-8pt]\\[-8pt]\nonumber
&\leq&
\tC^*_n(\Phi,\tilde{\Gamma}_{k+2}) + \tC_n\bigl(
\Phi,\tilde{\Gamma}'_{k+2}\bigr) + \tC_n\bigl(
\Phi,\tilde{\Gamma}''_{k+2}\bigr).
\end{eqnarray}
With these combinatorial inequalities in hand, we are now ready to
develop limit theorems for the Betti numbers of the random \v{C}ech and
Vietoris--Rips complexes (Section~\ref{secexpbetti}) as well as
find thresholds for vanishing and nonvanishing of homology groups
(Section~\ref{secvanthresholdhomgrp}).

\subsection{Expectations of Betti numbers}\label{secexpbetti}
We return now to the setting of a stationary point process $\Phi$ in
$\mR^d$ and the sequence of finite point processes $\Phi_n$. Our
results all follow quite easily from the corresponding limit theorems
in Section~\ref{secsubgraphrgg}, and we continue to use the
notation of that section without further comment.

The underlying heuristic is that in the sparse regime the order is
determined by the order of the minimal structure involved in forming
homology groups, which is $O_k$ for the random Vietoris--Rips complex
and $\Gamma_k$ for the random \v{C}ech complex. Using Theorem~\ref
{propexpsubgrsp} for the Vietoris--Rips complexes and Theorem~\ref
{propexpcompsp} for the \v{C}ech complexes, it is easy to see that
these are the leading order terms and that the $G$ and $\tilde G$ terms
in both (\ref{eqnboundbettiVR}) and (\ref{eqnboundbettiC}) are,
asymptotically, irrelevant. Hence, we have the following result.
%

%
\begin{theorem}[(Sparse regime: $r_n \to0$)]
\label{propbettinosp}
Let $\Phi$ be a stationary point process in $\mR^d$ satisfying the
assumptions in Theorem~\ref{propexpsubgrsp} for all $k \geq1$.
Let $r_n \to0$. Further, assume that $\tilde{\mu}_0(\Phi,\tilde
{\Gamma}_{k+2})>0$ for all $k \in\{0,\ldots,d-1\}$ and $\mu_0(\Phi,O_k)
> 0$ for all $k \geq1$. Then
\begin{eqnarray*}
\lim_{n \to\infty} \frac{\mathbb{E} \{\beta_k(C(\Phi
_n,r_n)) \} }{n r_n^{d(k+1)} f^{k+2}(r_n)} & = & \tilde{\mu
}_0(\Phi,\tilde{\Gamma}_{k+2}),\qquad k \in\{0,\ldots,d-1\},
\\
\lim_{n \to\infty} \frac{\mathbb{E} \{\beta_k(R(\Phi
_n,r_n)) \} }{n r_n^{d(2k+1)} f^{2k+2}(r_n)} & = & \mu_0(\Phi,O_k),\qquad k
\geq1.
\end{eqnarray*}
For $k = 0$, we have that
\begin{eqnarray}
\lim_{n \to\infty} \frac{\mathbb{E} \{\beta_0(C(\Phi
_n,r_n)) \} }{n} = \lim_{n \to\infty}
\frac{\mathbb{E}
\{\beta_0(R(\Phi_n,r_n)) \} }{n} = 1.
\nonumber
\end{eqnarray}
\end{theorem}

\begin{pf}
We start with the case $k \geq1$ and $k \in\{0,\ldots,d-1\}$ for the
Vietoris--Rips and \v{C}ech complexes, respectively. From Theorems
\ref{propexpsubgrsp} and~\ref{propexpcompsp}, the orders of
magnitude of the terms in (\ref{eqnboundbettiVR}) and (\ref
{eqnboundbettiC}) are as follows:
\begin{eqnarray*}
\mathbb{E} \bigl\{\tC^*_n(\Phi,\tilde{\Gamma}_{k+2}) \bigr
\} &=& \Theta\bigl(n r_n^{d(k+1)} f^{k+2}(r_n)
\bigr),
\\
\mathbb{E} \bigl\{\tC_n\bigl(\Phi,\tilde{\Gamma}'_{k+2}
\bigr) \bigr\} &=& \Theta\bigl(n r_n^{d(k+2)}
f^{k+3}(r_n)\bigr),
\\
\mathbb{E} \bigl\{\tC_n\bigl(\Phi,\tilde{\Gamma}''_{k+2}
\bigr) \bigr\} &=& \Theta\bigl(n r_n^{d(k+2)}
f^{k+3}(r_n)\bigr),
\\
\mathbb{E} \bigl\{J_n(\Phi,O_k) \bigr\} &=& \Theta
\bigl(n r_n^{d(2k+1)} f^{2k+2}(r_n)\bigr),
\\
\mathbb{E} \bigl\{\tilde{G}\bigl(\Phi,\Gamma^j_{k}\bigr)
\bigr\} &=& \Theta\bigl(n r_n^{d(2k+2)} f^{2k+3}(r_n)
\bigr), \qquad1 \leq j \leq n_k.
\end{eqnarray*}
Substituting these into (\ref{eqnboundbettiVR}) and (\ref
{eqnboundbettiC}), and
using the fact that the limits of $\mathbb{E} \{\tC^*_n(\Phi,\tilde
{\Gamma}_{k+2}) \} $ and $\mathbb{E} \{J_n(\Phi,O_k) \} $ are
explicitly known from Theorems~\ref{propexpsubgrsp} and~\ref{propexpcompsp}, completes the proof
of the theorem.

For the case $k = 0$, the bounds similar to (\ref{eqnboundbettiVR})
on $\beta_0$ and a similar argument will give the right asymptotics.
\end{pf}
Turning now to the thermodynamic regime, and applying the same
arguments as in the previous proof, but using Theorems~\ref{propexpsubgrth} and~\ref{propexpcompth} in place of Theorems
\ref{propexpsubgrsp} and~\ref{propexpcompsp}, we find that all
the terms in (\ref{eqnboundbettiVR}) and (\ref{eqnboundbettiC})
are of order $\Theta(n)$. This leads to the following
result.
%

\begin{theorem}[(Thermodynamic regime: $r_n^d \to\beta$)]
Let $\Phi$ be a stationary point process in $\mR^d$ satisfying the
assumptions in Theorem~\ref{propexpsubgrth} for all $k \geq1$.
Let $r_n^d \to\beta\in(0,\infty)$. Further, assume that $\tilde
{\gamma}_{\beta}(\Phi,\tilde{\Gamma}_k)>0$ for all $k \in\{
0,\ldots,d-1\}$ and $ \gamma_{\beta}(\Phi,O_k) > 0$ for all $k \geq
1$. Then, for all $k \geq0$,
\[
\mathbb{E} \bigl\{\beta_k\bigl(R(\Phi_n,r_n)
\bigr) \bigr\} = \Theta(n), %
\]
and for all $k \in\{0,\ldots,d-1\}$,
\[
\mathbb{E} \bigl\{\beta_k\bigl(C(\Phi_n,r_n)
\bigr) \bigr\} = \Theta(n).
\]
\end{theorem}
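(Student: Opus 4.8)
The plan is to run the argument of the proof of Theorem~\ref{prop:betti_no_sp} essentially verbatim, but feeding in the thermodynamic-regime asymptotics of Theorems~\ref{prop:exp_sub_gr_th} and \ref{prop:exp_comp_th} in place of the sparse ones. Concretely: I would take expectations in the combinatorial sandwiches \eqref{eqn:bound_betti_VR} and \eqref{eqn:bound_betti_C}, and estimate every subgraph and subcomplex count that appears.

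For $k \ge 1$ (Vietoris--Rips) and $k \in \{0,\ldots,d-1\}$ (\v{C}ech), \eqref{eqn:bound_betti_VR} and \eqref{eqn:bound_betti_C} give, after taking expectations,
\[
\EXP{J_n(\Phi_n,O_k)} \le \EXP{\beta_k(R(\Phi_n,r_n))} \le \EXP{J_n(\Phi,O_k)} + \sum_{j=1}^{n_k}\EXP{\tilde{G}_n(\Phi,\Gamma^j_k)},
\]
\[
\EXP{\tC^*_n(\Phi,\tilde{\Gamma}_{k+2})} \le \EXP{\beta_k(C(\Phi_n,r_n))} \le \EXP{\tC^*_n(\Phi,\tilde{\Gamma}_{k+2})} + \EXP{\tC_n(\Phi,\tilde{\Gamma}^{'}_{k+2})} + \EXP{\tC_n(\Phi,\tilde{\Gamma}^{''}_{k+2})}.
\]
Since $r_n^d \to \beta \in (0,\infty)$, there is no vanishing normalising factor here, and Theorem~\ref{prop:exp_sub_gr_th} will give $\EXP{J_n(\Phi,O_k)} \sim \gamma_\beta(\Phi,O_k)\,n$ and $\EXP{\tilde{G}_n(\Phi,\Gamma^j_k)} = \Theta(n)$ (a finite linear combination of terms $\EXP{G_n(\Phi,\Gamma')} \sim \mu_\beta(\Phi,\Gamma')\,n$), while Theorem~\ref{prop:exp_comp_th} will give $\EXP{\tC^*_n(\Phi,\tilde{\Gamma}_{k+2})} \sim \tilde{\gamma}_\beta(\Phi,\tilde{\Gamma}_{k+2})\,n$ and $\EXP{\tC_n(\Phi,\tilde{\Gamma}^{'}_{k+2})}, \EXP{\tC_n(\Phi,\tilde{\Gamma}^{''}_{k+2})} = \Theta(n)$. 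I would also note that the discrepancy between $J_n(\Phi_n,O_k)$ and $J_n(\Phi,O_k)$, i.e.\ between $J_n$ and $\tJ_n$, is $o(n)$, exactly as in the proof of Theorem~\ref{prop:exp_sub_gr_th}, so the left-hand side of the first display is also $\sim \gamma_\beta(\Phi,O_k)\,n$. Hence both ends of each sandwich are $\Theta(n)$ --- the $\Omega(n)$ halves using precisely the standing hypotheses $\gamma_\beta(\Phi,O_k) > 0$ and $\tilde{\gamma}_\beta(\Phi,\tilde{\Gamma}_{k+2}) > 0$ --- and the conclusion $\EXP{\beta_k} = \Theta(n)$ follows.

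For $k = 0$ I would argue directly, since \eqref{eqn:bound_betti_VR} and \eqref{eqn:bound_betti_C} are stated only for $k \ge 1$: here $\beta_0(C(\Phi_n,r_n)) = \beta_0(R(\Phi_n,r_n))$ is simply the number of connected components of $G(\Phi_n,r_n)$, which is at most the number of vertices and at least the number of isolated points. The former gives $\EXP{\beta_0} \le \EXP{\Phi(W_n)} = n$; the latter gives $\EXP{\beta_0} \ge \EXP{J_n(\Phi,K_1)}$ with $K_1$ the one-vertex graph, and by the $k=1$ case of Theorem~\ref{prop:exp_sub_gr_th} this is $\sim \gamma_\beta(\Phi,K_1)\,n = \P^!_O\{\Phi(B_O(\beta^{1/d})) = 0\}\,n$, which is $\Omega(n)$ since the Palm void probability is strictly positive under the standing assumptions (for a negatively associated $\Phi$ it dominates $\pr{\Phi(B_O(\beta^{1/d})) = 0} > 0$ by Lemma~\ref{lem:NA_palm_void}).

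The genuine work having already been done in Section~\ref{sec:sub_graph_rgg}, I do not expect a serious obstacle. The one point that needs care --- and that motivates including the positivity hypotheses in the statement --- is the lower bound: one must know that the leading constants $\gamma_\beta(\Phi,O_k)$, $\tilde{\gamma}_\beta(\Phi,\tilde{\Gamma}_{k+2})$ (and the Palm void probability, for $k=0$) do not vanish, because Theorems~\ref{prop:exp_sub_gr_th}--\ref{prop:exp_comp_th} supply only the $O(n)$ upper bound in general (recall the Cox-process example following Theorem~\ref{prop:exp_sub_gr_th}). In contrast to the sparse regime, one does \emph{not} need the extraneous graphs $\Gamma^j_k, \tilde{\Gamma}^{'}_{k+2}, \tilde{\Gamma}^{''}_{k+2}$ to be of smaller order: here they are also $\Theta(n)$, which does no harm since only their $O(n)$ bound enters the sandwich.
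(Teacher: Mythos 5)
Your proposal is correct and follows essentially the same route as the paper, which proves this theorem by exactly the argument you describe: substitute the thermodynamic-regime asymptotics of Theorems \ref{prop:exp_sub_gr_th} and \ref{prop:exp_comp_th} into the sandwiches \eqref{eqn:bound_betti_VR} and \eqref{eqn:bound_betti_C}, so that every term is $\Theta(n)$, with the positivity hypotheses $\gamma_\beta(\Phi,O_k)>0$ and $\tilde{\gamma}_\beta(\Phi,\tilde{\Gamma}_{k+2})>0$ supplying the lower bound. Your added details (the $o(n)$ control of $J_n$ versus $\tJ_n$, and the direct vertex/isolated-point argument for $k=0$) are exactly the points the paper leaves implicit by referring back to the sparse-regime proof.
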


The above asymptotics have been strengthened to convergence and strong
laws in the recent preprint \cite{Yogesh14}. Further, we note without
proof that one can obtain ordering results for Betti numbers of $\al
-w$ ordered point processes in the sparse regime analogous to Corollary
\ref{corcompsubgrlts} but not in the thermodynamic regime.

\subsection{Thresholds for homology groups}\label{secvanthresholdhomgrp}
Our aim in this subsection is to establish results about the conditions
under which different homology groups appear and disappear in the
homology of random complexes. We shall need to treat \v{C}ech and
Vietoris--Rips complexes separately, and start with results on the
contractibility of these. We follow these with the key results of the
section, on thresholds for the appearance and
disappearance of homology groups. These results also show that $\gamma
$-weakly sub-Poisson point processes have lower vanishing thresholds
for given $\Gamma$-components. As a corollary to the results on \v
{C}ech complexes, we also obtain an asymptotic result on the behavior
of the Euler characteristic $\chi (C(\Phi,r))$.

Recall that there are a number of equivalent definitions for the Euler
characteristic. However, the most natural for us at this point is
%
\begin{eqnarray}
\chi \bigl(C(\Phi,r)\bigr):= \sum_{k \geq0}(-1)^k
\beta_k\bigl(C(\Phi,r)\bigr). \label{Eulerbybettiequn}
\end{eqnarray}
%

\begin{teo}[(Contractibility of \v{C}ech complexes)]\label{teocechcontract}
Let $\Phi$ be a stationary \mbox{$\gamma$-}weakly sub-Poisson point process.
Then there exists a $C_d > 0$ such that for $r_n \geq C_d(\log
n)^{1/d}$, w.h.p. $C(\Phi_n,r_n)$ is contractible and $\chi
(C(\Phi_n,r_n)) = 1$.
\end{teo}

\begin{pf}
We start with a proof of contractibility and then show that $\chi
(C(\Phi_n,r_n)) = 1$, w.h.p.
As in the proof of contractibility for Poisson \v{C}ech complexes in
\cite{Kahle11}, Theorem~6.1, we shall show that,
for our choice of $r_n$, the set $\bigcup_{X \in\Phi_n}B_X(r_n/2)$
covers $W_n$ w.h.p. Then the nerve theorem of \cite{Bjorner95},
Theorem~10.7, implies that the \v{C}ech complex is contractible
w.h.p.
Let $\mZ^d$ be the $d$-dimensional lattice, and let $Q_{z_i},1 \leq i
\leq N_n$ be an enumeration of the cubes of the scaled lattice $\frac
{r_n}{4\sqrt{d}}\mZ^d$ that are fully contained within $W_n$. If
every cube contains a point of $\Phi$, then $\bigcup_{X \in\Phi
_n}B_X(r_n/2)$ covers $W_n$. By the union bound,
\begin{eqnarray*}
\mathbb{P} \biggl\{ W_n \nsubseteq\bigcup
_{X \in\Phi_n}B_X(r_n/2) \biggr\} & \leq&
\sum_{i=1}^{N_n}\mathbb{P} \bigl\{
\Phi(Q_{z_i}) = 0 \bigr\}
\\
& \leq& N_n\mathbb{P} \biggl\{ \Phi_{(1)}
\biggl(B_O\biggl(\frac{r_n}{8\sqrt
{d}}\biggr)\biggr) = 0 \biggr\}
\\
&\leq& \frac{(4\sqrt{d})^dn}{r_n^d}e^{-({r_n}/({8\sqrt{d}}))^d},
\end{eqnarray*}
where $\Phi_{(1)}$ is the Poisson point process of unit intensity. All
that remains is to choose an appropriate $C_d > 0$ to complete the
proof of contractibility for general stationary $\gamma$-weakly
sub-Poisson point processes.

As for the proof of the statement about the Euler characteristic, the
following obvious bound suffices:
\begin{eqnarray*}
&&\mathbb{P} \biggl\{ W_n \subset\bigcup
_{X \in\Phi_n}B_X(r_n/2) \biggr\}
\\
&&\qquad\leq\mathbb{P} \bigl\{ \beta_0\bigl(C(\Phi_n,r_n)
\bigr) = 1, \beta_k\bigl(C(\Phi_n,r_n)\bigr)
= 0, k \geq1 \bigr\}
\\
&&\qquad\leq\mathbb{P} \bigl\{ \chi\bigl(C(\Phi_n,r_n)
\bigr) =1 \bigr\}.
\end{eqnarray*}\upqed
\end{pf}

With these results in hand, we can now use bounds (\ref
{eqnboundbettiVR}) and (\ref{eqnboundbettiC}) along with $L_2$
convergence results of Theorems~\ref{teophtralnegassoc},~\ref
{propexpcompsp} and~\ref{propexpcompth} to complete the picture
about vanishing and nonvanishing of homology groups of \v{C}ech
complexes and Vietoris--Rips complexes.
%

\begin{teo}[(Thresholds for \v{C}ech complexes)]\label{teothresholdhomgrpcech}
Let $\Phi$ be a stationary point process satisfying the assumptions on
its joint intensities $\rho^{(k)} $ as in Theorems~\ref
{propexpsubgrsp} and~\ref{propexpsubgrth} for all $k \geq1$.
Then the following statements hold:
\begin{enumerate}[(2)]
\item[(1)] Let $\Phi$ be a $\gamma$-weakly sub-Poisson point process.
\begin{longlist}[(a)]
\item[(a)] If
\[
r_n^{d(k+1)}f^{k+2}(r_n) = o
\bigl(n^{-1}\bigr)\quad\mbox{or}\quad r_n^d = \omega(
\log_n),
\]
then\vspace*{1pt} $\beta_k(C(\Phi_n,r_n)) = 0, k \in\{0,\ldots,d-1\}$, w.h.p.

\item[(b)] If $r_n^d = \omega(\log_n)$, then $\beta_0(C(\Phi_n,r_n)) =
1$, w.h.p.
\end{longlist}

\item[(2)] Let $\Phi$ be a negatively associated point process.
Further assume that $\tilde{\mu}_0(\Phi,\Gamma_k)>0$ and $\tilde
{\gamma}_{\beta}(\Phi,\Gamma_k) > 0$, both for all $k \in\{
0,\ldots,d-1\}$ and all $\beta> 0$.
\begin{longlist}[(a)]
\item[(a)] If
\[
r_n^{d(k+1)}f^{k+2}(r_n) = \omega
\bigl(n^{-1}\bigr)\quad\mbox{and}\quad r_n^d = O(1),
\]
then $\beta_k(C(\Phi_n,r_n)) \neq0, k \in\{0,\ldots,d-1\}$,
w.h.p.
\item[(b)] If $r_n^d = O(1)$, then $\beta_0(C(\Phi_n,r_n)) \neq0$,
w.h.p.
\end{longlist}
\end{enumerate}
%
\end{teo}

In the absence of a contractibility result for the Vietoris--Rips
complex, we are unable to estimate the second thresholds, where the
homology groups vanish. Thus we have the following less complete
picture for the Vietoris--Rips complex. Since $H_0(C(\Phi_n,r_n)) =
H_0(R(\Phi_n,r_n))$, we shall restrict ourselves to only $H_k(R(\Phi
_n,r_n)), k \geq1$, in the following theorem.

%
\begin{teo}[(Thresholds for Vietoris--Rips complexes)]\label
{teothresholdhomgrpvr}
Let $\Phi$ be a stationary point process satisfying the assumptions on
its joint intensities $\rho^{(k)} $ as in Theorems~\ref{propexpsubgrsp} and~\ref{propexpsubgrth} for all $k \geq1$.
Then the following statements hold for $k \geq1$:
\begin{longlist}[(2)]
\item[(1)] If
\[
r_n^{d(2k+1)}f^{2k+2}(r_n) = o
\bigl(n^{-1}\bigr),
\]
then $\beta_k(R(\Phi_n,r_n)) = 0$, w.h.p.
\item[(2)] Let $\Phi$ be a negatively associated point process.
Further assume that $\mu_0(\Phi,O_k)>0$ and $\gamma_{\beta}(\Phi,O_k) >
0$, both for all $k \geq1$ and all $\beta> 0$. If
\[
r_n^{d(2k+1)}f^{2k+1}(r_n) = \omega
\bigl(n^{-1}\bigr)\quad\mbox{and}\quad r_n^d = O(1),
\]
then $\beta_k(R(\Phi_n,r_n)) \neq0$, w.h.p.
\end{longlist}
%
\end{teo}

\subsection{Further results for the Ginibre process}\label{secGinibre}

Using the special structure of the Ginibre point process, we can
improve on the threshold results of the last section. The radius regime
for contractibility of \v{C}ech complexes over the Ginibre point
process and zeros of GEF can be made more precise, as more is known
about void probabilities in these cases. Once we have the
contractibility or connectivity results, the upper bounds on the
thresholds for vanishing of Betti numbers in this special case can be improved.
%

\begin{teo}[(Contractibility of \v{C}ech complexes)]\label{teocechcontractgin}
Let $\Phi$ be the Ginibre point process or zeros of GEF. Then there
exists a $C_d > 0$ (depending on the point process) such that for $r_n
\geq C_d(\log n)^{1/4}$, w.h.p. $C(\Phi_n,r_n)$ is
contractible. Hence, $\beta_0(C(\Phi_n,r_n)) = 1$, $\beta_k(C(\Phi
_n,r_n)) = 0$, $k \geq1$ and $\chi(C(\Phi_n,\break r_n)) = 1$ w.h.p. for
$r_n^2 = \omega(\sqrt{log n})$.
\end{teo}

\begin{pf}
The proof follows along similar lines as the proof of Theorem~\ref
{teocechcontract} except that in this case, the void probabilities
are of strictly lower order and so, the radius for contractibility as
well. More precisely, we know from \cite{Ben09}, Proposition~7.2.1 and
Theorem 7.2.3, that for the Ginibre point process and zeros of
GEF, $- \log(\mathbb{P} \{ \Phi(B_O(r)) = 0 \}) = \Theta
(r^4)$ as $r \to\infty$. All that remains is to substitute these
bounds into the proof of Theorem~\ref{teocechcontract} to derive the
corresponding results for the Ginibre point process and zeros of GEF.
\end{pf}

For Vietoris--Rips complexes, we do not have a contractibility result
for the Ginibre point proceses, but as a consequence of the upper
bounds for the Palm void probabilities, we can obtain upper bounds on
the threshold for the vanishing of the Betti numbers as well.

%
%
\begin{teo}[(Disappearence of homology groups for Vietoris--Rips
complexes)]\label{teoVRcontractgin}
Let $\Phi$ be the Ginibre point process. Then there exists a $C_{d,k}
> 0$ such that for $r_n \geq C_{d,k}(\log n)^{1/4}$, we have
that w.h.p. $\beta_k(R(\Phi_n,r_n)) = 0$, $k \geq1$.
\end{teo}

The proof uses the discrete Morse theoretic approach (see \cite
{Forman02}) similar to that of~\cite{Kahle11}, Theorem 5.1, and the
reader is referred to that proof and the Appendix in~\cite{Kahle11}
for missing details. As in \cite{Kahle11}, Theorem 5.1, our proof
actually shows topological $k$-connectivity, though we do not state it
here explicitly to avoid defining further topological notions.

\begin{pf*}{Proof of Theorem \ref{teoVRcontractgin}}
As the point process is simple and stationary, index the points in
$\Phi$ as $X_1,X_2,\ldots$ such that $\llVert X_1\rrVert< \llVert
X_2\rrVert< \llVert X_3\rrVert<\cdots.$ Define $V$ to be the
collection of pairs of simplices
$(V_1,V_2)$, $V_1 \subset V_2$ with
\[
V_1 = [X_{i_1},\ldots,X_{i_k}]\quad\mbox{and}\quad
V_2 = [X_{i_0},X_{i_1},\ldots,X_{i_k}],
\]
where $i_0 < i_1 < \cdots< i_k$. In words, we pair a simplex with
another simplex of codimension $1$ in the original simplex only if the
additional point is closer to the origin than the rest. A simplex that
is not in $V$ is said to be a critical simplex. Let $C_k$ be the number
of critical $k$-simplices of $V$. From discrete Morse theory, we know
that $ \beta_k(R(\Phi_n,r_n)) \leq C_k$. Thus, we only need to show
that $\mathbb{E} \{C_k \} \to0$ for all $k \geq1$, for
an appropriate choice of radii.

A $k$-simplex $\X= [X_{i_0},\ldots,X_{i_k}]$ where $i_0 < i_1 <
\cdots< i_k$ is critical only if
\[
\Phi_n\Biggl(\bigcap_{j=0}^kB_{X_{i_j}}(r)
\cap B_O\bigl(\llVert X_{i_0}\rrVert\bigr) \Biggr) = \{
X_{i_0}\}.
\]
Hence, using Campbell--Mecke formula for the first inequality, then
using \cite{Kahle11}, Lemma 5.3---that is, for a critical $k$-simplex
as above, there exists an $\epsilon_d > 0$ and $x \in\mR^d$ such that
\[
B_x(\epsilon_d) \subset\bigcap
_{j=0}^kB_{X_{i_j}}(r) \cap B_O
\bigl(\llVert X_{i_0}\rrVert\bigr)
\]
---and Lemma~\ref{lempalm-voidGinibre} for the second inequality and
finally $\rho^{(k)} \leq1$ for the last inequality, we find that
\begin{eqnarray*}
\mathbb{E} \{C_k \} & \leq& \int_{W_n^{k+1}} 1[\mathbf{x}\mbox{ is a
simplex}]1\bigl[\llVert x_{i0}\rrVert< \llVert
x_{i_1}\rrVert< \cdots< \llVert x_{i_k}\rrVert\bigr]
\\
&&\hspace*{26pt}{} \times\P^!_{\mathbf{x}}\Biggl\{\Phi_n\Biggl(\bigcap
_{j=0}^kB_{x_{i_j}}(r) \cap B_O
\bigl(\llVert x_{i_0}\rrVert\bigr)\Biggr) = 0\Biggr\}
\rho^{(k)}(\mathbf{x}) \,d\mathbf{x}
\\
& \leq& \exp\biggl\{(k+1) (\epsilon_dr)^2 - (
\epsilon_dr)^4\biggl(\frac{1}{4} + o(1)\biggr)
\biggr\}\int_{W_n \times B_{x_{i0}}(r)^k} \rho^{(k)}(\mathbf{x}) \,d
\mathbf{x}
\\
& \leq& n r^{2k} \exp\biggl\{k(\epsilon_dr)^2
- (\epsilon_dr)^4\biggl(\frac
{1}{4} + o(1)\biggr)
\biggr\}.
\end{eqnarray*}
It is easy to see that there exists a constant $C_{d,k} > 0$ such that
$\mathbb{E} \{C_k \} \to0$ for $r_n \geq C_{d,k}(\log
n)^{1/4}$, and so we are done.
\end{pf*}
%

\section{Morse theory for random geometric complexes}\label{secmorsergc}

Our aim in this section is to present a collection of results
concerning random geometric complexes, but from the viewpoint of Morse theory.

In fact, we have already used discrete Morse theory to derive some of
the connectivity thresholds for Vietoris--Rips complexes in Theorem~\ref{teoVRcontractgin}. However, in addition to this essentially
combinatorial Morse theory, there is a different and more geometric
version of Morse theory for nonsmooth functions on ``nice'' manifolds~\cite{gershkovichmorse1997}. While discrete Morse theory can be
applied to study simplicial complexes without requiring any information
on an ambient space in which the complex is embedded, in a geometric
setting such as ours one can exploit knowledge of the ambient
(Euclidean, in our case) space to apply the so-called ``min-type'' Morse theory.

This theory has also been exploited in the past to study of random
geometric complexes on Poisson and i.i.d. point processes in \cite
{Bobrowski11}, where it was shown that this Morse theoretic approach
can give an intrinsically richer set of results than that obtained by
attacking homology directly. Further, these Morse theoretic results
have, as usual, implications about Betti numbers. We wish to point out
that each of thse quite distinct versions of Morse theory have proved
to be useful tools in the study of random complexes.

We do not intend to give full proofs here, but rather to set things up
in such a way that parallels between the structures that have appeared
in previous sections and those that are natural to the Morse theoretic
approach become clear, and it becomes ``obvious'' what the Morse
theoretic results will be. Full proofs would require considerable more
space, but would add little in terms of insight. We note, however, that
this does not make the proofs of \cite{Bobrowski11} in any way
redundant. On the one hand, the results there go beyond what we have
here (albeit only for the Poisson and i.i.d. cases), and it is their
existence that allows us to be certain that the parallels work properly.

We start with some definitions and a quick description of the Morse
theoretic setting.

\subsection{Morse theory}\label{secmorsetheory}

Morse theory for geometric complexes is based on the distance function,
$d_{\Phi}\dvtx\mR^d \to\mR_+$, defined by
\[
d_{\Phi}(x):= \min_{X \in\Phi}\llVert x-X\rrVert,\qquad x \in
\mR^d.
\]
Note that while classical Morse theory deals with smooth functions, the
distance function is piecewise linear, but nondifferentiable along
subspaces. The extension to the distance function of classical Morse
theory is discussed in detail in \cite{Bobrowski11}, based on the
definitions and results in \cite{gershkovichmorse1997},
and we shall adopt the same approach. The main difference between
smooth Morse theory and that based on the distance function lies in the
definition of the indices of critical points.

Critical points of index $0$ of the distance function are the points
where $d_{\Phi} = 0$, which are local and global minima, and are the
points\vspace*{1pt} of $\Phi$. For higher indices, define the critical points as
follows: A point $c \in\mR^d$ is said to be a \textit{critical point}
with index $1 \leq k \leq d$ if there exists a collection of points $\X
= \{X_1,\ldots,X_{k+1}\} \subset\Phi^{(k+1)}$ such that the
following conditions hold:
\begin{longlist}[(2)]
\item[(1)] $d_{\Phi}(c) = \llVert c-X_i\rrVert$ for all $1 \leq i
\leq k+1$ and
$d_{\Phi}(c) < \llVert c-Y\rrVert$ for all $Y \in\Phi\setminus\X$.
\item[(2)] The points $X_i, 1 \leq i \leq k+1$ lie in general
position; namely, they do not lie in a $(k-1)$-dimensional affine space.
\item[(3)] $c \in \conv^o(\X)$, where $\conv^o(\X)$ denotes the
interior of the convex hull formed by the points of $\X$.
\end{longlist}
Let $C(\X)$ denote the center of the unique $(k-1)$-dimensional sphere
(if it exists) containing the points of $\X\in\Phi^{(k+1)}$ and
$R(\X)$ be the radius of the ball. The conditions in the definition of
critical points can be reduced to the following more workable
conditions; see \cite{Bobrowski11}, Lemma 2.2. A set of points $\X
\in\Phi^{(k+1)}$ in general position generates an index $k$ critical
point if and only if
\[
C(\X) \in \conv^o(\X)\quad \mbox{and}\quad \Phi\bigl(B_{C(\X)}\bigl(R(
\X)\bigr)\bigr) = 0.
\]
Our interest lies in critical points which are at most at a distance
$r$ from $\Phi$, namely, those for which $d_{\Phi}(c) \leq r$, or,
equivalently $R(\X) \leq r$. The reason for this lies in the simple
fact that
\begin{eqnarray*}
d_\Phi^{-1}\bigl([0,r]\bigr) = \bigcup
_{x\in\Phi} B_x(r),
\end{eqnarray*}
and, as we already noted earlier, by the nerve theorem this is homotopy
equivalent to the \v{C}ech complex $C(\Phi,r)$.

The following indicator functions will be required to draw the analogy
between counting critical points and counting components of random
geometric graphs. For $\X\in\Phi^{(k+1)}$, define
\begin{eqnarray*}
h(\X)&:=& \1\bigl[C(\X) \in \conv^o(\X)\bigr],
\\
h_r(\X)&:=&
\1\bigl[C(\X) \in \conv^o(\X)\bigr]\1\bigl[R(\X) \leq r\bigr].
\end{eqnarray*}
Note that these functions are translation and scale invariant, as were
the $h_{\Gamma}$ functions defined for the subgraph and component\vspace*{1pt}
counts in Section~\ref{secsubgraphrgg}; namely, for all $x \in\mR
^d$ and $\y= (0,y_1,\ldots,y_k) \in\mR^{d(k+1)}$,
\[
h_r(x,x + ry_1,\ldots,x + ry_k) =
h_1(\y).
\]
This was the key property of $h_{\Gamma}$ used to derive asymptotics
for component counts. Thus, once we manage to represent the numbers of
critical points as counting statistics of $h_r$, the analogy with
component counts is made.
To this end, let $N_k(\Phi,r)$ be the number of critical points of
index $k$ for the distance function $d_{\Phi}$ that are at most at a
distance $r$ from $\Phi$. Then
%
%
\begin{equation}
\label{eqncriticalpts} N_k(\Phi,r) = \sum_{\X\in\Phi^{(k+1)}}h_r(
\X)\1\bigl[\Phi\bigl(B_{C(\X
)}\bigl(R(\X)\bigr)\bigr) = 0\bigr].
\end{equation}
The similarity between the expression for $N_k(\Phi_n,r_n)$ and $J_n$
[cf. (\ref{eqnGn})] should convince the reader that the method of
proof used for component counts will also suffice for a derivation of
the asymptotics of Morse critical points. Although the void indicator
term is slightly different, we can use the fact that $R(\X) \leq r$
for $h_r(\X) = 1$ to apply the techniques of Section~\ref{secsubgraphrgg} with only minor changes.

\subsection{Limit theorems for expected numbers of critical points}\label{secexpmorsepts}
As in previous sections, we shall give results for the sparse and
thermodynamic regimes separately. In the Betti number results, in the
sparse regime ($r_n \to0$) the scaling factor of $n$ for $J_n$ (see
Theorem~\ref{propexpsubgrsp}) arose from the translation
invariance of $h_{\Gamma}$ and $\Phi$. The factor of $r_n^{d(k-1)}$
was due to the scale invariance of $h_{\Gamma}$, and the factor of
$f^k(r_n)$ came from the scaling of the joint intensities $\rho
^{(k)}$. Since $h_r$ is also translation and scale invariant, we work
under the same assumptions on $\Phi$ as in Theorems~\ref
{propexpsubgrsp} and~\ref{propcovbdssp} with corresponding
conditions $h_r$ in order to obtain asymptotics for expected number of
critical points of the distance function. Also, $\mathbb{E} \{
N_0(\Phi_n,r) \} = \mathbb{E} \{\Phi(W_n) \} = n$
for all $r \geq0$ and so we shall focus only on $N_k, 1 \leq k \leq
d$. The corresponding result is as follows:
%

\begin{theorem}[(Sparse regime)]\label{propmorseptssp}
Let $\Phi$ be a stationary point process in $\mR^d$ satisfying the
assumptions of Theorem~\ref{propexpsubgrsp} for all $1 \leq k \leq
(d+1)$. Let $r_n \to0$ and $\y= (0,y_1,y_2,\ldots,y_k)$. Then, for
$1 \leq k \leq d$,
\begin{eqnarray*}
\lim_{n \to\infty} \frac{\mathbb{E} \{N_k(\Phi_n,r_n)
\}}{n r_n^{dk} f^{k+1}(r_n)} & = & \nu_k(\Phi,0)
\\
&:= & \frac{1}{(k+1)!}\int_{\mR^{dk}}h_1(
\y)g^{k+1}_{\rho}(\y) \,d\y.
\end{eqnarray*}
Further, $\mathsf{Var} (N_k(\Phi_n,r_n) ) = O(\mathbb
{E} \{N_k(\Phi_n,r_n )\} )$ for negatively associated
point processes.
\end{theorem}

One point that is deserving of additional comment for the proof is
that, as in Theorem~\ref{propexpsubgrsp}, we can omit the void
probability term in the limit by the following reasoning: since $R(\y)
\leq r$ if $h_r(\y) = 1$, $\y= (0,y_1,\ldots,y_k)$, we have that
whenever $h_{r_n}(\y) = 1$,
\[
\bigl\{\Phi\bigl(B_{C(r_n^{1/d} \y)}\bigl(r_n^{1/d}\bigr)
\bigr) = 0 \bigr\} \subset\bigl\{\Phi\bigl(B_{C(r_n^{1/d} \y)}\bigl
(r_n^{1/d}R(
\y)\bigr)\bigr) = 0 \bigr\}, %
\]
and the probability of the left event here (and hence the right as
well) tends to $1$. This follows from similar arguments to those in the
proof of Theorem~\ref{propexpsubgrsp}.

Turning now to the thermodynamic regime, we saw in Theorem~\ref{propexpsubgrth}
that the sole scaling factor of $n$ for component
counts is due to the translation invariance of $h_{\Gamma}$ and $\Phi
$. The same remains true for mean numbers of critical points.
%

\begin{theorem}[(Thermodynamic regime: $r_n^d \to\beta$)]\label{propmorseptsth}
Let\vspace*{1pt} $\Phi$ be a stationary point process in $\mR^d$ satisfying the
assumptions of Theorem~\ref{propexpsubgrth} for all $1 \leq k \leq
(d+1)$. Let $r_n^d \to\beta\in(0,\infty)$ and $\y=
(0,y_1,y_2,\ldots,y_k)$. Then, for $1 \leq k \leq d$,
\begin{eqnarray*}
&& \lim_{n \to\infty} \frac{\mathbb{E} \{N_k(\Phi
_n,r_n )\} }{n}
\\
&&\qquad = \nu_k(\Phi,
\beta)
\\
&&\qquad:= \frac{\beta^k}{(k+1)!}\int_{\mR^{dk}}h_1(\y)
\P^{!}_{\beta^{1/d} \y}\bigl(\Phi\bigl(B_{C(\beta^{1/d} \y)}\bigl(
\beta^{1/d}R(\y)\bigr)\bigr) = 0\bigr)\rho^{(k)}\bigl(
\beta^{1/d} \y\bigr) \,d\y.
\end{eqnarray*}
Further, assume that $\Phi$ is also a negatively associated point
process such that
\[
\mathbb{P} \bigl\{ \Phi\bigl(B_{C(\mathbf{x})}\bigl(\beta^{1/d}\bigr)
\bigr) = 0 \bigr\} > 0
\]
for a.e. $\mathbf{x}= (0,x_1,\ldots,x_k) \in B_0(3\beta^{1/d})^{k+1}$,
and for all $1 \leq k \leq d$. Then $\nu_{k}(\Phi,\break \beta
) > 0$ for all $1 \leq k \leq d$.

Also,
$\mathsf{Var}
(N_k(\Phi_n,r_n) ) = O(\mathbb{E} \{N_k(\Phi
_n,r_n) \} )$ for negatively associated point processes.
\end{theorem}

As previously, the void probability needs some attention. In this case,
to show its positivity, we again use the fact that $R(\y) \leq1$ if
$h_1(\y) = 1$, and hence, whenever $h_1(\y) = 1$,
\[
\bigl\{\Phi\bigl(B_{C(\beta^{1/d} \y)}\bigl(\beta^{1/d}\bigr)\bigr) = 0
\bigr\} \subset\bigl\{\Phi\bigl(B_{C(\beta^{1/d} \y)}\bigl(\beta
^{1/d}R(\y)\bigr)\bigr)
= 0\bigr\}.
\]
The positivity of the first event under Palm probability is guaranteed
by our assumption via Lemma~\ref{lemNApalmvoid}.

Finally, we turn to a result about Euler characteristics that is not
accessible from the non-Morse theory. We
already defined the Euler characteristic in terms of Betti numbers at
(\ref{Eulerbybettiequn}), and showed in Theorem~\ref{teocechcontract} that, in the connectivity regime, it is~$1$ with
high probability. However, taking an alternative, but equivalent,
definition via numbers of Morse critical points, we can deduce its
$L_1$ asymptotics in the sparse and thermodynamic regimes as a
corollary of the previous results in this section. The alternative
definition, which is more amenable to computations due to the bounded
number of terms in the following sum, is
\[
\chi\bigl(C(\Phi,r)\bigr):= \sum_{k = 0}^d(-1)^kN_k(
\Phi,r).
\]
%

\begin{theorem}
\label{propEulercharspth}
Let $\Phi$ be a stationary point process in $\mR^d$ satisfying the
assumptions of Propositions~\ref{propexpsubgrsp} and~\ref
{propexpsubgrth} for all $1 \leq k \leq(d+1)$:
%
\begin{longlist}[(iii)]
\item[(i)] If $r_n \to0$, then
\[
n^{-1} \mathbb{E} \bigl\{{\chi\bigl(C(\Phi_n,r_n)
\bigr)} \bigr\} \to1.
\]
\item[(ii)] If $r_n^d \to\beta\in(0,\infty)$, then
\[
n^{-1}\mathbb{E} \bigl\{{\chi\bigl(C(\Phi_n,r_n)
\bigr)} \bigr\} \to1 + \sum_{k = 1}^d(-1)^k
\nu_k(\Phi,\beta).
\]

\item[(iii)]If $\Phi$ is also a negatively associated point process,
then the above convergences also hold in the $L_2$-norm.
\end{longlist}
\end{theorem}

To prove the part~(iii) of the theorem, we need variance bounds, which
is why we require the additional assumption of negative association.
For example, in the sparse regime, we have the following bound via the
Cauchy--Schwarz inequality:
\begin{eqnarray*}
&& \mathbb{E} \biggl\{\biggl\llVert\frac{\chi(C(\Phi_n,r_n))}{n}-1\biggr
\rrVert^2 \biggr\}
\\
&&\qquad  =  \mathbb{E} \Biggl\{\Biggl\llVert\biggl(
\frac{\Phi
(W_n)}{n}-1\biggr) + \sum_{k = 1}^d(-1)^k
\frac{N_k(\Phi,r)}{n}\Biggr\rrVert^2 \Biggr\}
\\
&&\qquad \leq d \Biggl(\frac{\mathsf{Var} (\Phi(W_n) )
}{n^2} + \sum_{k = 1}^d
\frac{\mathbb{E} \{N_k(\Phi
_n,r_n)^2 \} }{n^2} \Biggr)
\\
&&\qquad =  d \Biggl(\frac{\mathsf{Var} (\Phi(W_n) ) }{n^2} + \sum_{k = 1}^d
\frac{\mathsf{Var} (N_k(\Phi_n,r_n) )
}{n^2} + \sum_{k = 1}^d
\frac{(\mathbb{E} \{
N_k(\Phi_n,r_n) \} )^2}{n^2} \Biggr).
\end{eqnarray*}
The $L_2$ convergence follows once it is noted that all the terms on
right-hand side converge to $0$ due to the variance bounds proven for
negatively associated point processes. A slight modification of this
argument handles the thermodynamic regime as well.

\begin{appendix}\label{secAppendix}
\section*{Appendix}

In this section, we prove the result about Palm void probabilities of
Ginibre point process that is used in the proof of Theorem~\ref
{teoVRcontractgin}. The proof is due to Manjunath Krishnapur.
%

\begin{lem}
\label{lempalm-voidGinibre}
Let $D = B_0(r) \subset\mR^2$ for some $r > 0$ and $\Phi$ be the
Ginibre point process. Then for $k \geq1$ and $\mathbf{x}\in\mR^{2k}$,
\[
\P^!_{\mathbf{x}}\bigl\{\Phi(D) = 0\bigr\} \leq\exp\bigl\{kr^2
\bigr\}\mathbb{P} \bigl\{ \Phi(D) = 0 \bigr\} = \exp\bigl\{kr^2 -
r^4\bigl(\tfrac{1}{4} + o(1)\bigr) \bigr\}.
\]
\end{lem}

\begin{pf}
We shall prove the result for $k = 1$. The proof for the general case
then follows by a recursive application of the same argument.

Let $\cK_D$ be the restriction to $D$ of the integral operator $\cK$
corresponding to Ginibre point process. Since the Palm process of the
Ginibre point process is also a determinantal point process, let $\cL
_D$ be the integral operator corresponding to the Palm point process
restricted to $D$. Let $\lam_i, i =1,2,\ldots$ and $\mu_i,
i=1,2,\ldots$ be the eigenvalues of $\cK_D$ and $\cL_D$,
respectively. From \cite{Shirai03}, Theorem~6.5, we know that $\cK_D
- \cL_D$ has rank $1$, and hence, by a generalization of Cauchy's
interlacement theorem, the respective eigenvalues are interlaced with
$\lam_i \geq\mu_i \geq\lam_{i+1}$ for $i =1,2,\ldots.$

Now, consider the case $k=1$ and we have the following inequality:
\[
\P^!_{\mathbf{x}}\bigl\{\Phi(D) = 0\bigr\} = \prod
_{i \geq1} (1 - \mu_i) \leq\prod
_{i \geq2} (1 - \lam_i) = (1 - \lam_1)^{-1}
\mathbb{P} \bigl\{ \Phi(D) = 0 \bigr\},
\]
where the two equalities are due to \cite{Ben09} Theorem~4.5.3, and
the inequality is due to the generalization of Cauchy's interlacement
theorem described above. Now using \cite{Ben09},
Proposition~7.2.1, to bound $\mathbb{P} \{ \Phi(D) = 0 \}$ and
from the fact that $1 - \lam_1 = \mathbb{P} \{ \operatorname{EXP}(1) > r^2
\} = \exp\{-r^2\}$ (see \cite{Ben09}, proof of Theorem 4.7.1),
where $\operatorname{EXP}(1)$ is the exponential random variable with mean $1$, we
have the desired inequality for the case $k=1$.
\end{pf}
\end{appendix}

\section*{Acknowledgments}

The authors wish to thank Manjunath Krishnapur for pointing out an
error in Remark \ref{re3.2}(9) and other useful discussions regarding
determinantal point processes. The authors also thank an anonymous
referee for many helpful comments on the first draft. Most of this work
was done when D.~Yogeshwaran was a postdoctoral researcher at Technion, Israel and
D.~Yogeshwaran thanks Technion for its support.



%

\printaddresses
\end{document}